\numberwithin{equation}{section}
\numberwithin{figure}{section}
\numberwithin{table}{section}
\theoremstyle{plain}
\newtheorem{thm}{\protect\theoremname}[section]
\theoremstyle{plain}
\newtheorem{prop}[thm]{\protect\propositionname}
\theoremstyle{remark}
\newtheorem{rem}[thm]{\protect\remarkname}
\theoremstyle{plain}
\newtheorem{lem}[thm]{\protect\lemmaname}
\theoremstyle{plain}
\newtheorem{cor}[thm]{\protect\corollaryname}
\providecommand{\corollaryname}{Corollary}
\providecommand{\lemmaname}{Lemma}
\providecommand{\propositionname}{Proposition}
\providecommand{\remarkname}{Remark}
\providecommand{\theoremname}{Theorem}
\newcommand{\email}[1]{{\href{mailto:#1}{\nolinkurl{#1}}}}
\begin{document}
\global\long\def\Cov{\operatorname{Cov}}%

\global\long\def\Var{\operatorname{Var}}%

\global\long\def\Lip{\operatorname{Lip}}%

\global\long\def\e{\mathrm{e}}%

\global\long\def\ii{\mathrm{i}}%

\global\long\def\R{\mathbf{R}}%

\global\long\def\Law{\operatorname{Law}}%

\global\long\def\sgn{\operatorname{sgn}}%

\global\long\def\supp{\operatorname{supp}}%

\global\long\def\diam{\operatorname{diam}}%

\global\long\def\spn{\operatorname{span}}%

\global\long\def\Hess{\operatorname{Hess}}%

\global\long\def\adj{\operatorname{adj}}%

\global\long\def\tr{\operatorname{tr}}%

\global\long\def\esssup{\operatorname*{ess\,sup}}%

\global\long\def\dif{\mathrm{d}}%

\global\long\def\eps{\varepsilon}%
\global\long\def\1{\mathbbm{1}}%
\global\long\def\A{\mathcal{A}}%

\title{A quenched local limit theorem for stochastic flows}
\author{Alexander Dunlap\thanks{Department of Mathematics, Courant Institute of Mathematical Sciences, New York University, New York, NY 10012 USA.             \email{alexander.dunlap@cims.nyu.edu}.} \and Yu
  Gu\thanks{Department of Mathematics, Carnegie Mellon University, Pittsburgh,  PA 15213 USA. \emph{Current address:} Department of Mathematics, University of Maryland, College Park, MD 20742 USA. \email{ygu7@umd.edu}.}}
\maketitle
\begin{abstract}
	We consider a particle undergoing Brownian motion in Euclidean space
	of any dimension, forced by a Gaussian random velocity field that
	is white in time and smooth in space. %
	We show that conditional
	on the velocity field, the quenched density of the particle after a long time can be approximated pointwise
	by
	the product of a deterministic Gaussian density and a spacetime-stationary
  random field $U$. If the velocity field is additionally assumed to be incompressible, then $U\equiv 1$ almost surely and we obtain a local central limit theorem.
\end{abstract}

\section{Introduction}

Let $(\Omega,\mathcal{H},\mathbf{P})$ be a probability space. %
Fix
a spatial dimension $d\in\mathbf{N}$. Let $V=(V_{1},\ldots,V_{d})$
be a   Wiener process on $L^{2}(\mathbf{R}^{d};\mathbf{R}^{d})$ that is spatially-smooth,
with covariance function formally given by
\[
	\mathbf{E}V_{i}(\dif t,x) V_{j}(\dif s,y)=\delta(t-s)R_{ij}(x-y)\dif s\dif t
\]
for some covariance function $R\in\mathcal{C}_{\mathrm{c}}^{\infty}(\mathbf{R}^{d};\mathbf{R}^{d\times d})$.
(That is, $R$ is smooth with compact support.) Let $\{\mathcal{F}_{t}\}$
be the usual filtration associated to $V$ (generated by $\{V(s)\ :\ s\le t\}$)
and let $\mathcal{F}=\bigvee_{t<\infty}\mathcal{F}_{t}$. Let $B=(B_{1},\ldots,B_{d})$
be a Brownian motion taking values in $\mathbf{R}^{d}$ (independent
of $\mathcal{F}$) with quadratic variation
\begin{equation}
	\langle B_{i},B_{j}\rangle(t)=\nu\delta_{ij}t\label{eq:Btcovkernel}
\end{equation}
for some $\nu>0$.
Let $\{\mathcal{G}_{t}\}$ be the usual filtration associated to
$B$, and let $\mathcal{G}=\bigvee_{t<\infty}\mathcal{G}_{t}$. Let
$\mathcal{H}_{t}=\mathcal{F}_{t}\vee\mathcal{G}_{t}$. We assume that the $\sigma$-algebra $\mathcal{H}$ is given by $\mathcal{H}=\mathcal{F}\vee\mathcal{G}$.

We are interested in the stochastic differential equation
\begin{align}
	\dif X(t) & = V(\dif t,X(t))+ \dif B(t);\label{eq:dX} \\
	X(0)      & =0,\label{eq:Xic}
\end{align}
which models a passive scalar in a environment that decorrelates rapidly in time.
We will interpret \cref{eq:dX} in the manner of \cite[Section 3.4]{Kun97};
that is, as equivalent to the \emph{Itô} integral equation
\[
	X(t)=B(t)+\int_{0}^{t} V(\dif s,X(s)),
\]
where $\{X(t)\}$ is assumed to be a continuous $\mathbf{R}^{d}$-valued
process adapted to $\{\mathcal{H}_{t}\}$.

This problem has a unique solution by \cite[Theorem 3.4.1]{Kun97}
(using our assumption on the smoothness of $R$). The process $\{X(t)\}$ is a continuous martingale with
quadratic variation process given by
\begin{equation}
	\langle X_{i},X_{j}\rangle(t)=(\nu\delta_{ij}+R_{ij}(0))t\label{eq:QVprocess},
\end{equation}
by \cite[Theorem 3.2.4]{Kun97}. Thus, the annealed law of $\{X(t)\}$ is actually a $d$-dimensional Brownian motion with covariance matrix $(\nu I_{d}+R(0))t$ at time $t$. Here we used $I_d$ to denote the $d\times d$ identity matrix. We will think of the forcing $V$
as a random velocity field and the forcing $B$ as a molecular diffusion,
so $\nu$ is the ``molecular diffusivity''. Our interest will be
in the quenched (with respect to the molecular diffusion)
law
of $X$ given by
\begin{equation}
	\mu_{t}=\Law[X(t)\mid\mathcal{F}].\label{eq:mutfromdelta}
\end{equation}
We will show in \cref{subsec:stochPDE} that, for $t>0$,  $\mu_{t}$ has a density
with respect to Lebesgue measure on\textbf{ $\mathbf{R}^{d}$} that
exists as a random field $\big(u(t,x)\big)_{t>0,x\in\mathbf{R}^{d}}$ (as a consequence of 
the molecular diffusion):
\begin{equation}\label{e.defdensity}
	\mu_t(\dif x)=u(t,x)\dif x,  \quad\quad t>0,  x\in\R^d.
\end{equation}
Thus, $u(t,\cdot)$ is a density function that feels the randomness of the velocity field. Let
$G_t$ be the solution to the PDE
\begin{equation}\label{eq:eqG}
	\begin{aligned}
		\partial_{t}G_t(x) & =\frac{1}{2}\tr[(\nu I_{d}+R(0))\nabla^{2}G_t(x)]; \\
		G_0                & =\delta_{0}.
	\end{aligned}
\end{equation}
Here and
throughout the paper, we use $\nabla^{2}$ to mean the Hessian operator,
\emph{not} the Laplacian.
Thus
\[G_t(x) = \frac1{[(2\pi t)^{d}\det (\nu I_d+R(0))]^{1/2}}\exp\left\{-\frac1{2t}x\cdot (\nu I_d+R(0))^{-1}x\right\}
\]
is the Gaussian density centered at the origin with
covariance matrix $(\nu I_{d}+R(0))t$, and so $G_t$ is the density of annealed law of $X(t)$. The goal of this paper is
to study the relationship between the quenched law $u(t,\cdot)$ and the annealed law $G_t(\cdot)$, and to understand how the randomness from the environment affects the \emph{local} behavior of the passive scalar. Here is the main theorem:
\begin{thm}
	\label{thm:maintheorem}There is a spacetime-stationary random field
	$U$, positive almost surely with $\mathbf{E}U\equiv1$, and, for every $\eps>0$,
	a constant $C=C(R,\nu,\eps)<\infty$ so that
		\begin{equation}
			\sup_{x\in\mathbf{R}^{d}}\mathbf{E}|u(t,x)-G_t(x)U(t,x)|^{2}\le  Ct^{-d}\bigg(t^{-1/3}\log t\1_{d=1}+t^{-2/(2+d)+\eps}\1_{d\ge 2}\bigg) %
			\label{eq:mainthm-bound}
		\end{equation}
		for all $t\ge C$. In particular, for any $c<\frac13\1_{d=1}+\frac{2}{2+d}\1_{d\ge 2}$, we have
		\begin{equation}
			\lim_{t\to\infty}\sup\left\{\mathbf{E}\left|\frac{u(t,x)}{G_t(x)}-U(t,x)\right|^{2}\ :\ (\nu I_{d}+R(0))^{-1}x\cdot x\le ct\log t\right\}=0.\label{eq:takethesup}
		\end{equation}
		We in fact have $U\equiv 1$ almost surely if and only if $\sum_{i=1}^d \frac{\partial R_{ij}}{\partial x_i}\equiv 0$ for each $j$ (which holds if and only if $V$ is incompressible almost surely). In general, we have, for all $t\in\R$ and $x_1,x_2\in\R^d$, that
		\begin{equation}
		 |\mathbf{E}[(U(t,x_1)-1)(U(t,x_2)-1)]|\le C(1+|x_1-x_2|)^{-d}\label{eq:Ucorrbd}
		\end{equation}
		and, for all $t_1,t_2\in\R$ and $x_1,x_2\in\R^d$, that
		\begin{equation}
		 |\mathbf{E}[(U(t_1,x_1)-1)(U(t_2,x_2)-1)]|\le C(1+|t_1-t_2|)^{-(d-\eps)/4}.\label{eq:Utimecorrbd}
		\end{equation}
\end{thm}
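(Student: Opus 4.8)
The plan is to realize $u$ as the solution of a linear stochastic PDE driven by $V$, to construct $U$ as the stationary solution of that same equation started from constant data in the remote past, and to compare the two by a time‑splitting (Markov) decomposition whose error terms are then optimized. \emph{Step 1 (the stochastic PDE).} From the construction of the density $u$ behind \cref{e.defdensity}, $u$ solves a linear Fokker--Planck‑type stochastic PDE for the Kraichnan model: in Stratonovich form $\partial_t u=\tfrac12\nu\Delta u-\nabla\cdot(u\,V(\dif t,\cdot))$, equivalently an Itô equation whose drift part is exactly the operator in \cref{eq:eqG}. The structural facts I will use are: (a) conservation of mass, $\int_{\R^d}u(t,x)\,\dif x=1$, from the divergence form of the noise; (b) a comparison/positivity principle, so the quenched transition density $q_{r,t}(z,\cdot)$ of $X$ over an interval $[r,t]$ is a.s.\ strictly positive; and (c) an \emph{approximate} finite range of dependence: since $R$ is compactly supported, $q_{r,t}(z,\cdot)$ is $\sigma(V|_{[r,t]})$‑measurable, as a function of $z$ it concentrates (with Gaussian tails) at scale $\sqrt{t-r}$ around its spatial argument, and its $\mathbf P$‑mean is the annealed transition density, a Gaussian with covariance $(\nu I_d+R(0))(t-r)$.

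\emph{Step 2 (construction of $U$ and the correlation bounds).} For $s\le t$ let $u^{(s)}(t,x)$ solve the same equation on $[s,\infty)$ with $u^{(s)}(s,\cdot)\equiv1$; since the annealed dynamics are driftless, $\mathbf Eu^{(s)}(t,x)=1$ for all $(t,x)$. The core quantitative input is a mixing estimate of the form
\[
 \mathbf E\big|u^{(s)}(t,x)-u^{(s')}(t,x)\big|^{2}\le C(1+t-s)^{-\alpha(d)},\qquad s'\le s\le t,
\]
with a dimension‑dependent $\alpha(d)>0$; granting it, $u^{(s)}(t,x)$ is Cauchy in $L^{2}(\mathbf P)$ as $s\to-\infty$ and we set $U(t,x):=\lim_{s\to-\infty}u^{(s)}(t,x)$. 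Spacetime stationarity is inherited from that of $V$; $\mathbf EU\equiv1$ and a.s.\ positivity follow from Step 1. For the dichotomy: if $V$ is incompressible the quenched flow preserves Lebesgue measure, so $u^{(s)}(t,\cdot)\equiv1$ and hence $U\equiv1$; conversely, if $\sum_i\partial_iR_{ij}\not\equiv0$ for some $j$, the leading (quadratic in $R$) term in $\Var U(t,x)$ is computed and shown to be strictly positive. Finally \cref{eq:Ucorrbd}, \cref{eq:Utimecorrbd} follow by writing $U(t,x_k)=u^{(s)}(t,x_k)+[U-u^{(s)}](t,x_k)$ and choosing $s$ to match the relevant separation: for \cref{eq:Utimecorrbd} take $s=t_1$, so that $u^{(t_1)}(t_2,x_2)$ is independent of $\mathcal F_{t_1}\supseteq\sigma(U(t_1,x_1))$, and apply Cauchy--Schwarz against the mixing estimate; for \cref{eq:Ucorrbd} combine the Gaussian spatial localization from Step 1(c) with the mixing estimate and optimize over $s$.

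\emph{Step 3 (the comparison \cref{eq:mainthm-bound}).} Fix large $t$ and $x\in\R^d$ and split $t=(t-s)+s$ with $s=s(t)\to\infty$, $s=o(t)$, to be chosen. By the Markov property of $X$, $u(t,x)=\int_{\R^d}u(t-s,z)\,q_{t-s,t}(z,x)\,\dif z$ with $q_{t-s,t}$ independent of $\mathcal F_{t-s}$, and $u^{(t-s)}(t,x)=\int q_{t-s,t}(z,x)\,\dif z$. Using $G_t=G_{t-s}*G_s$ one checks the exact identity $u(t,x)-G_t(x)U(t,x)=\mathrm A+\mathrm B+\mathrm C+\mathrm D$, where $\mathrm A=\int[u(t-s,z)-G_{t-s}(z)U(t-s,z)]\,q_{t-s,t}(z,x)\,\dif z$ is the macroscopic‑fluctuation term; $\mathrm B=\int G_{t-s}(z)[U(t-s,z)-1]\,q_{t-s,t}(z,x)\,\dif z$; $\mathrm C=\int[G_{t-s}(z)-G_{t-s}(x)]\,q_{t-s,t}(z,x)\,\dif z$, controlled by the Gaussian bounds $|\nabla^k G_{t-s}|\le C t^{-k/2}G_{t-s}$ (second‑order Taylor) together with the small variance of $q_{t-s,t}$ about its Gaussian mean; and $\mathrm D=[G_{t-s}(x)-G_t(x)]u^{(t-s)}(t,x)+G_t(x)[u^{(t-s)}(t,x)-U(t,x)]$, the equilibration error, bounded by the mixing estimate and $|G_{t-s}-G_t|\le C(s/t)G_t$. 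Term $\mathrm B$ is $G_t(x)$ times a scale‑$\sqrt s$ spatial average of $U-1$, whose variance decays by \cref{eq:Ucorrbd}. For $\mathrm A$, independence of $q_{t-s,t}$ from $\mathcal F_{t-s}$ together with mass conservation of the annealed kernel $G_s$ converts the $L^2(\mathbf P)$ estimate into a self‑improving inequality $\mathcal E(t)\le\theta(s)\,\mathcal E(t-s)+[\text{errors in }s,t]$, where $\mathcal E(r):=\sup_xG_r(x)^{-2}\mathbf E|u(r,x)-G_r(x)U(r,x)|^{2}$ and the contraction $\theta(s)\to0$ comes from $q_{t-s,t}(\cdot,x)$ living at scale $\sqrt s$ near $x$ while the spatial correlations of $u(t-s,\cdot)-G_{t-s}(\cdot)U(t-s,\cdot)$ are essentially integrable. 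Iterating over $\sim t/s$ steps, started from the trivial a priori bound $\sup_r\mathcal E(r)<\infty$, and optimizing $s$ gives \cref{eq:mainthm-bound} — the $d=1$ rate (and absence of an $\eps$‑loss there) reflecting the corresponding sharpening of the mixing estimate. Then \cref{eq:takethesup} follows from \cref{eq:mainthm-bound} and $G_t(x)^{-2}\le C t^{d}\exp\{(\nu I_d+R(0))^{-1}x\cdot x/t\}\le C t^{d+c}$ on the indicated region, for $\eps$ small.

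\emph{Main obstacle.} Everything rests on the quantitative mixing estimate of Step 2 (equivalently, on the contraction $\theta(s)\to0$ and the spatial decorrelation of $u(t-s,\cdot)-G_{t-s}U(t-s,\cdot)$ in Step 3). Qualitative ergodicity of this SPDE is comparatively soft; the polynomial, dimension‑dependent rate — which must degrade through the $\1_{d\ge2}$ terms and absorb an arbitrarily small $\eps$ — requires real control of how fast dependence on the remote past is forgotten. I expect to obtain it from a Clark--Ocone/Malliavin representation of $u^{(s)}(t,x)-1$, bounding $\mathbf E\big|\mathbf E[D_{r,\cdot}u^{(s)}(t,x)\mid\mathcal H_r]\big|^{2}$ and integrating $r$ over $[s,t]$; the decay in $t-r$ of this quantity mirrors the decay of a heat kernel tested against the spatial white noise and is precisely what yields the exponents in \cref{eq:mainthm-bound}--\cref{eq:Utimecorrbd}. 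Making the ``finite range of dependence'' of Step 1(c) rigorous (it is only Gaussian‑approximate) is a secondary, routine point.
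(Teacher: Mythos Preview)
Your architecture---construct $U$ as the $L^2$ limit of solutions started from constant data in the remote past, then compare $u$ to $G_tU$ by a time split---matches the paper's. But the engine that drives every quantitative estimate is completely different, and your proposal does not actually build it.

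The paper does \emph{not} use Malliavin/Clark--Ocone, nor an iterative self-improving inequality. Instead, every $L^2$ bound is reduced to a \emph{deterministic parabolic PDE} on $\mathbf R^{2d}$. For any two solutions $u_1,u_2$ of the SPDE one writes $S_t(w,z)=\mathbf E[u_1(t,w+z/2)u_2(t,w-z/2)]$ and checks that $S_t$ solves $\partial_t S=\tr[\nabla^2(AS)]$ with an explicit $A(z)$ built from $R$. The $z$-marginal equation $\partial_t\bar S=\tr[\nabla^2(A_{22}\bar S)]$ has a bounded invariant density $\chi$ with $|\chi(z)-1|\le C|z|^{-d}$ (obtained from the nondivergence-form theory in Escauriaza's work, using that $A_{22}$ is a compactly supported perturbation of a constant matrix). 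The mixing rate you need in Step~2 is then simply the rate at which $\bar S_t\to\chi$, and that comes in one line from H\"older's inequality against the Gaussian upper bound on the fundamental solution and $\chi-1\in L^p$; this yields $(t+M)^{-(d-\eps)/2}$ and explains both the $\eps$-loss for $d\ge2$ and its absence for $d=1$ (where $\chi-1\in L^1$ explicitly). Moreover $\mathbf E[U(t,x_1)U(t,x_2)]=\chi(x_1-x_2)$ exactly, so \cref{eq:Ucorrbd} is read off from the decay of $\chi-1$, not obtained by optimizing over an auxiliary time as you suggest.

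Your Step~3 also differs in a way that introduces a circularity. The paper's split is a \emph{single} step: set $q=t-t^\beta$, define $\tilde u_q(t)=\mathbf E[u(t)\mid\mathcal F_{q,t}]$ (which solves the SPDE on $[q,t]$ with initial data $G_q$), and bound $\mathbf E|u(t,x)-\tilde u_q(t,x)|^2=S_t-\tilde S_{q,t}$ directly via Duhamel for the second-moment PDE, using the decomposition $A=\overline A+\tilde A$ with $\tilde A$ compactly supported. No iteration is needed, and no prior spatial decorrelation of $u-G_tU$ is assumed. In your scheme, by contrast, the contraction $\theta(s)<1$ for the term $\mathrm A$ is asserted to come from ``spatial correlations of $u(t-s,\cdot)-G_{t-s}U(t-s,\cdot)$ being essentially integrable,'' which is precisely the kind of information you are trying to establish; without an independent bound on those two-point correlations the recursion does not close.

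In short: the missing idea is the second-moment PDE and its invariant density $\chi$. Once you have those (and the Escauriaza-type Gaussian bounds on the fundamental solution), everything in the theorem---the Cauchy estimate for $u^{(s)}$, the exact covariance $\chi(x_1-x_2)$, the time-decorrelation via independence of increments, and the one-shot noise-off comparison---follows by short deterministic calculations, with the rates in \cref{eq:mainthm-bound} fixed by optimizing the single parameter $\beta$.
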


For $\delta>0$, define $X_{\delta}(t)=\delta X(\tfrac{t}{\delta^2})$, so the quenched density of $X_{\delta}(t)$ is
\[
	u_{\delta}(t,x)={\delta}^{-d}u(\tfrac{t}{{\delta}^2},\tfrac{x}{\delta}).
\]
By \cref{eq:mainthm-bound}, we have for any $t>0,x\in\R^d$, that
\[
	\mathbf{E}|u_{\delta}(t,x)- G_t(x) U(\tfrac{t}{{\delta}^2},\tfrac{x}{\delta})|^2\to0, \text{ as }\delta\to0.
\]
In other words, the quenched density of the diffusively rescaled process is approximately the Gaussian density multiplied by a stationary random field. The stationary random field can thus be viewed as the ``corrector'' in stochastic homogenization, in the sense that it multiplies a homogenized field to give the exact field. This corrector is the  constant $1$ in the incompressible case, so we obtain a local central limit theorem.

Results similar to \cref{thm:maintheorem} were proved for discrete-time random walk in a discrete-space random environment in \cite{BMP99, DeGu19}.
(See also the survey \cite[§1.4.3]{BMP07} regarding the result of \cite{BMP99}.)
Since the result concerns the long-time behavior of the system, one
does not expect a substantial difference between the discrete and
continuous settings. However, the local temporal roughness of the
driving force introduces substantial complications in establishing
the required estimates, as we discuss in \cref{subsec:proofstrategy}
below. Moreover, \cref{thm:maintheorem} provides a quantitative rate of convergence, and it is meaningful in the entire diffusive bulk region (i.e. $|x|\lesssim t^{1/2}$), while \cite[Theorem 2]{BMP99} only  holds for $|x|\ll t^{1/3}$. Similar results were also shown in \cite{TLD16,BW21} for certain exactly-solvable models, with the one-point distribution of the correction field $U$ characterized explicitly. It is also worth mentioning that for reversible random walks/diffusions in random environments, e.g., the random conductance model, one can actually prove the local central limit theorem. Using our notation this says that $u_{\delta}(t,x)\approx G_t(x)$ for $\delta\ll1$, similar to our result when $V$ is incompressible. We refer the reader to \cite{andres2014invariance,andres2021quenched} and the references therein.

The stationary random field $U$ is a spacetime stationary solution to the Fokker-Planck \eqref{utdef} below with $\mathbf{E} U=1$, which is closely related to the invariant measure of the process of ``environment seen from the particle,'' a crucial object in the study of random walk/diffusion in random environment. This connection was also made in \cite{DeGu19} for random walks in a balanced random environment. We refer to \cref{rem:environment} for more discussion.

Our interest in the quenched density $u(t,x)$ is motivated in part by the recent work on the moderate- and
large-deviations regime of diffusion in a time-dependent random environment, which decorrelates rapidly; see the discussions \cite{BC17,BLD20,barraquand2020large,LDT17} in both the physics and
mathematics literature. In the diffusive regime $x\sim \sqrt{t}$, which is what we consider here, it is well-known that the diffusion scales to a Brownian motion, see e.g. the discussion on similar models in \cite{boldrighini2004random,rassoul2005almost, fannjiang1999turbulent,komorowski2001homogenization} and \cite{komorowski2012fluctuations} for a
monograph on the subject.  (In our special setting of white-in-time noise, the annealed law of $X(t)$ is actually exactly the Brownian motion, and using our main result, it is not difficult to show that the quenched law is approximately the Brownian motion on the diffusive scale.) To study the error, one can consider quantities of the form
\[
	\mathbf{E}[f(\eps X(\tfrac{t}{\eps^2}))\mid\mathcal{F}]-\int_{\R^d}f(x)G_t(x)\dif x=\int_{\R^d} f(x) [u_\eps(t,x)-G_t(x)]\dif x,
\]
where $f:\R^d\to\R$ is an arbitrary smooth function. The Edwards-Wilkinson type fluctuation is proved in \cite{BRAS06,BP01,yu2016edwards}, i.e., after a proper rescaling, $\{u_\eps(t,x)-G_t(x)\}_{t>0,x\in\R^d}$ converges in law and weakly in space to a Gaussian field that solves a stochastic heat equation with an additive Gaussian noise. Compared to our result, the difference is that we consider the  fluctuation   $u_\eps(t,x)-G_t(x)$  for any fixed $(t,x)$, rather than performing a spatial averaging under which the local fluctuations average out so that one needs to consider the next order error to observe random fluctuations. One can also look at super-diffusive regimes. In the moderate-deviations
regime of $x\sim t^{3/4}$, the KPZ equation arises \cite{BLD20} (see \cite{corwin2017kardar} for a similar result in a weak noise but large-deviations regime). The large-deviations regime $x\sim t$
is associated with the KPZ fixed point, and the Tracy-Widom type distribution was derived in  \cite{BC17}.

In \cite{LDT17}, the relation between the diffusion in time-dependent random environments and the KPZ universality class was explored. For $\log u(t,x)$, the Edwards--Wilkinson universality was actually conjectured to prevail in the diffusive regime, and it was also pointed out that the expected normal statistics seems to be different from the one studied in \cite{BRAS06}. Our result of $u(t,x)\approx G_t(x)U(t,x)$ in $t\gg1$ shows that the random fluctuation is governed by the stationary random field $U$, but we do not  observe log-normal fluctuations of $U(t,x)$. Instead, as it will become clear later in the proof, $U(t,x)$ is  a deterministic functional of the local random environment near $(t,x)$, so there is actually no averaging taking place.  It is very similar to the case of a directed polymer in a random environment in dimension $d\geq3$ at high temperature, where it is well-known that the polymer path is diffusive and the partition function is approximately a deterministic functional of the random environment near the endpoint.

We approach the problem from a more analytic perspective.
We will show in \cref{prop:utsolvesSPDE} below that $u$ satisfies
the stochastic PDE
\begin{align}
	\dif u(t) & =\frac{1}{2}\tr[(\nu I_{d}+R(0))\nabla^{2}u(t)]\dif t-\nabla\cdot[u(t) V(\dif t)],\qquad t>0;\label{eq:utdef-intro} \\
	u(0)      & =\delta_{0},\label{eq:uic-intro}
\end{align}
which can be seen as a Fokker--Planck equation with random coefficients. Then the field $U$ in \cref{thm:maintheorem}
is in fact a spacetime-stationary solution to \cref{eq:utdef-intro}, starting from constant initial data $u(0,x)\equiv1$.
Thus, \cref{thm:maintheorem} is quite similar to the ``homogenization-type''
theorems of \cite{DGRZ18,CCM20} proved for the stochastic heat equation
with weak noise in $d\ge3$, in that it shows how to approximate the
solution to a stochastic PDE with a compactly-supported initial condition
by a deterministic evolution multiplied by a random spacetime-stationary
solution. (A similar result was proved for directed polymers in $d\ge3$
in \cite{BMP97}.)

In the case when the forcing is assumed to be incompressible (i.e.
$\nabla\cdot V \equiv 0$ almost surely), the SPDE \cref{eq:utdef-intro} has been
extensively studied in the turbulence community as the ``rapid decorrelation
in time model'' or ``Kraichnan model.'' See \cite{MK99} and the
references therein. %

\subsection{Proof strategy\label{subsec:proofstrategy}}

As pointed out above, our result is quite similar in form to the
results on the stochastic heat equation in $d\ge3$. %
If we ignore convergence issues and formally write the mild solution
formula to \cref{eq:utdef-intro}
\begin{equation}
	\begin{aligned}u(t) & =G_{t}*u(0)-\int_{0}^{t}\int G_{t-s}*\nabla\cdot[u(s)V(\dif s)] \\
                    & =G_{t}*u(0)-\int_{0}^{t} \nabla G_{t-s}* [u(s)V(\dif s)],
	\end{aligned}
	\label{eq:mildsolution}
\end{equation}
then we immediately see the similarity between \cref{eq:utdef-intro}
and the stochastic heat equation, with the only difference coming from
the use of $\nabla G_{t-s}$ instead of $G_{t-s}$ in the stochastic integral term.
This extra gradient is the reason our result holds in $d\ge1$, rather
than the requirement of $d\ge3$ for the stochastic heat equation. To see it more clearly, one can look at the first order ``chaos'', which is the first random term obtained by iterating the mild formulation: for SHE, we obtain $ \int_0^t G_{t-s}* V(\dif s)$, which converges to a stationary Gaussian field in large time, only in $d\geq3$; for the Fokker-Planck equation, the convergence of $\int_0^t\nabla G_{t-s}* V(\dif s)$ to a stationary Gaussian field holds in any dimension.
The extra gradient also means that making \cref{eq:mildsolution} rigorous
is quite nontrivial, due to the worse singularity of $\nabla G_{t}(x)$
near $(t,x)=(0,0)$. (Here we mention a recent work \cite{KH20} for a special class of $V$.) Thus, we do
not use the formulation \cref{eq:mildsolution} in the present work,
and instead use another approach to make sense of the SPDE \cref{eq:utdef-intro}.

While it is not difficult to formally derive \cref{eq:utdef-intro}
as the Fokker--Planck equation associated with the passive scalar evolution \cref{eq:dX},
solution theories for the stochastic PDE \cref{eq:utdef-intro} are
rather intricate; see the discussion in \cite[pp.~2–3]{KH20}. We
will use a solution theory due to Kunita \cite{Kun94gsaspde} (similar
to the approach described in \cite[§6.2]{Kun97}) that uses stochastic
flows to make sense of the stochastic PDE. We note that we require a somewhat stronger solution theory than simply deriving the problem \cref{eq:utdef-intro}--\cref{eq:uic-intro}
solved by the density, because, as indicated above, we will also need to
construct spacetime-stationary solutions to \cref{eq:utdef-intro}, with the initial data $u(0,x)\equiv1$. We recall the results we will need in \cref{subsec:stochPDE}.
This approach requires $R$ to be (qualitatively)
several times differentiable, which we have assumed in our work. Alleviating
this restriction was part of the goal of \cite{KH20}, but results in this
direction are not yet strong enough for our purposes.

To justify the approximation
\begin{equation}\label{eq:uGU}
	u(t,x)\approx G_t(x) U(t,x),   \quad\quad t\gg1,
\end{equation} and thus prove \cref{thm:maintheorem},  our strategy is similar to that of
\cite{DG20} for the 2D nonlinear stochastic heat equation. Namely, we first
approximate \cref{eq:utdef-intro} by the equation for
which the noise has been turned off in the time interval $[0,q]$, for some properly chosen $q$ so that $1\ll t-q\ll t$.  Then we show that the latter
solution can be approximated locally in space by a stationary
solution. Basically, the evolution of \cref{eq:utdef-intro} in the time interval $[0,q]$, which is almost of length $t$, generates the factor $G_t(x)$ in \cref{eq:uGU}, while the evolution in the remaining interval $[q,t]$, which is  macroscopically small but microscopically large, ``feels'' the random environment and produces the factor $U(t,x)$ in \cref{eq:uGU}. A difference is that \cite{DG20}
works with a stochastic heat equation in $d=2$, where spacetime-stationary
solutions do not exist. Thus, as we have stated before, phenomenologically
the situation is more similar to that considered in \cite{DGRZ18,CCM20},
although in those works a different approach based on the Feynman--Kac
formula was used in the proofs.

Proving the mentioned bounds in \cite{DG20} was done using the mild
solution formula, the analogue of \cref{eq:mildsolution}. A discrete
chaos expansion was also the key technique used for the proof in \cite{BMP99}.
As we have stated, we do not (at present) have a mild solution theory
for the SPDE \cref{eq:utdef-intro}. Thus we work in a more analytic way, using the PDE satisfied by  the two-point
correlation function of the solution to \cref{eq:utdef-intro} in \cref{subsec:second-moment-PDE}.
This PDE has been used before in the case of the Kraichnan model (i.e.
when the forcing is assumed incompressible); see for example \cite{Maj93}. Then we use tools from the theory of parabolic PDE (in particular \cite{Fri64,Esc00}) to prove the required bounds on the correlations. We establish these bounds in \cref{subsec:PDEbounds}. Then we apply them in \cref{sec:stationary}
to prove the existence of the spacetime-stationary solution $U$ and in
\cref{sec:deltaic} to complete the proof of \cref{thm:maintheorem}.

\subsection{Acknowledgments}

A.D. was partially supported by the NSF Mathematical Sciences Postdoctoral Fellowship program via grant no.\ DMS-2002118. Y.G. was partially supported by the NSF through DMS-1907928 and DMS-2042384 and the Center for Nonlinear Analysis of CMU. We thank Guillaume Barraquand for helpful comments on a draft of the manuscript, Xiaoqin Guo for discussions, and Margaret Smith at NYU Libraries for help in obtaining copies
of references during the COVID-19 pandemic.

\section{Setup and preliminaries\label{sec:setup}}

Throughout the paper, the letter $C$ will denote a positive constant
depending on $R$ and $\nu$, and only on other parameters if specified
explicitly. We will allow $C$ to change from line to line
if necessary.

We wish to derive a stochastic PDE satisfied by $\mu_{t}$, but before
we do this we will generalize \cref{eq:dX}--\cref{eq:Xic} to the setting
of \emph{stochastic flows} (see \cite[Chapter 4]{Kun97}). Let $\varphi_{s,t}(x)$
($s,t\in\mathbf{R}$, $x\in\mathbf{R}^{d}$) be the family of random
diffeomorphisms solving the family of SDEs
\begin{align}
	\dif_t\varphi_{s, t}(x) & =  V(\dif t,\varphi_{s,t}(x))+ \dif B(t);\label{eq:flowSDE} \\
	\varphi_{s,s}(x)        & =x,\label{eq:flowIC}
\end{align}
by which we mean solving the stochastic Itô integral equations
\begin{equation}\label{eq:551}
	\varphi_{s,t}(x)=x+\int_{s}^{t}[V(\dif r,\varphi_{s,r}(x))+\dif B( r)], \quad\quad t\geq s.
\end{equation}
This means that the solution  to \cref{eq:dX}--\cref{eq:Xic} will
be given by $X(t)=\varphi_{0,t}(0)$. Such a solution exists and is unique
by \cite[Theorem 4.5.1]{Kun97}.

\subsection{The stochastic PDE\label{subsec:stochPDE}}

Now for a Borel measure $\mu_{0}$ on $\mathbf{R}^{d}$, which we
assume to live in some weighted Sobolev space (of negative regularity)
with at most polynomial growth at infinity, let $\tilde{\mu}_{t}$   %
be the pushforward measure of $\mu_{0}$ by $\varphi_{0,t}$, so for any
$A\subset\mathbf{R}^{d}$, we have
\begin{equation}
	\tilde{\mu}_{t}(A)=\mu_{0}(\varphi_{0,t}^{-1}(A)).\label{eq:utildedef}
\end{equation}
Thus, $\tilde{\mu}_{t}$ is an $\mathcal{H}_{t}$-measurable random
measure.

The definition \cref{eq:utildedef} is similar to \cite[(2.14)]{Kun94sfaosd}
and \cite[(2.4)]{Kun94gsaspde}, which define the composition of a
tempered distribution and a stochastic flow. We emphasize, however,
that the composition of a tempered distribution with a diffeomorphism
is \emph{not} a generalization of the pushforward of a measure by
a diffeomorphism, as the former construction involves a factor of
the Jacobian determinant of the diffeomorphism. That is, our definition
\cref{eq:utildedef} is in fact the same as defining
\begin{equation}
	\tilde{\mu}_{t}=\left(\frac{\mu_{0}}{\det D\varphi_{0,t}}\right)\circ\varphi_{0,t}^{-1},\label{eq:utildedef-tempereddistn}
\end{equation}
where the $\circ$ denotes composition of distributions, in the sense
that
\[
	\langle\tilde{\mu}_{t},f\rangle=%
	\int\frac{1}{\det D\varphi_{0,t}(x)}(\det D\varphi_{0,t}(x))f(\varphi_{0,t}(x))\,\dif\mu_{0}(x)=\int f(\varphi_{0,t}(x))\,\dif\mu_{0}(x),
\]
which agrees with \cref{eq:utildedef}. The determinants involved in
the last two formulas are positive, so there is no need to take an
absolute value.

Now we define
\begin{equation}
	\mu_{t}=\mathbf{E}[\tilde{\mu}_{t}\mid\mathcal{F}],\label{eq:takeconditionalexpectation}
\end{equation}
so \cref{eq:mutfromdelta} represents the special case when $\mu_{0}=\delta_{0}$.
Conditional expectations of the form \cref{eq:takeconditionalexpectation}
were constructed and studied in \cite{Kun94gsaspde}. By \cite[Theorem 3.2]{Kun94gsaspde}
(which relies on the partial Malliavin calculus developed in \cite{BM81,KS84}),
for all $t>0$ the measure $\mu_{t}$ has a (spatially) smooth density
$u(t)$ with respect to the Lebesgue measure almost surely. This
property comes from the ellipticity implied by \cref{eq:Btcovkernel}
of the molecular diffusion. The following proposition shows that \cref{eq:takeconditionalexpectation} solves the Fokker-Planck in an appropriate sense:
\begin{prop}
	\label{prop:utsolvesSPDE}The function $u$, considered as a time-indexed family of tempered
	distributions on\textbf{ $\mathbf{R}^{d}$}, is the unique solution
	of the Itô stochastic PDE
	\begin{align}
		\dif u( t)              & =\frac{1}{2}\tr[(\nu I_{d}+R(0))\nabla^{2}u(t)]\dif t-\nabla\cdot[u(t)V(\dif t)],\qquad t>0;\label{eq:utdef} \\
		\lim_{t\downarrow0}u(t) & =\mu(0)\label{eq:ic}
	\end{align}
	in the ``generalized solution'' sense analogous to \cite[(3.3)]{Kun94gsaspde}: for almost every realization of the random environment, we have for all Schwartz functions $h:\R^d\to\R$ that
		\begin{equation}
			\langle u(t),h\rangle =\langle\mu(0),h\rangle+\int_0^t\frac12\langle u(s),\tr[(\nu I_{d}+R(0))\nabla^{2}h]\rangle\,\dif s+\int_0^t\langle u(s),\nabla h\cdot V(\dif s)\rangle, \quad\quad t>0.\label{eq:generalizedsolution}
		\end{equation}
\end{prop}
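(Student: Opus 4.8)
\emph{Plan and Step 1 (deriving the equation).} The plan is to read the equation off the stochastic-flow representation \eqref{eq:flowSDE}--\eqref{eq:551} and then project onto $\mathcal{F}$, following the approach of Kunita \cite{Kun94gsaspde,Kun97}. First I would derive the SPDE satisfied by the pushforward $\tilde\mu_t$ itself. Applying It\^o's formula to $h(\varphi_{0,t}(x))$ for a fixed Schwartz function $h$, and using that the quadratic-covariation matrix of $\varphi_{0,\cdot}(x)$ equals $(\nu I_d+R(0))t$ (both copies of the driving field being evaluated at the same point, as in \eqref{eq:QVprocess}), gives
\[
\dif h(\varphi_{0,t}(x))=\nabla h(\varphi_{0,t}(x))\cdot\bigl[V(\dif t,\varphi_{0,t}(x))+\dif B(t)\bigr]+\tfrac12\tr\bigl[(\nu I_d+R(0))\nabla^2 h(\varphi_{0,t}(x))\bigr]\,\dif t.
\]
Integrating this against $\dif\mu_0(x)$ — justified by a stochastic Fubini theorem using the assumed weighted-Sobolev/polynomial-growth control on $\mu_0$ together with the moment bounds for the flow and its spatial derivatives from \cite[Ch.~4]{Kun97} — and recalling \eqref{eq:utildedef}, I obtain that, for every Schwartz $h$ and all $t>0$,
\[
\langle\tilde\mu_t,h\rangle=\langle\mu_0,h\rangle+\int_0^t\langle\tilde\mu_s,\nabla h\cdot V(\dif s)\rangle+\int_0^t\langle\tilde\mu_s,\nabla h\cdot\dif B(s)\rangle+\int_0^t\tfrac12\langle\tilde\mu_s,\tr[(\nu I_d+R(0))\nabla^2 h]\rangle\,\dif s,
\]
which is the generalized-solution identity of \cite[\S6.2]{Kun97} for the distribution \eqref{eq:utildedef-tempereddistn}.

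\emph{Step 2 (conditioning on $\mathcal{F}$).} Next I would apply $\mathbf{E}[\,\cdot\mid\mathcal{F}]$ to this identity and use \eqref{eq:takeconditionalexpectation}. The term $\langle\mu_0,h\rangle$ is $\mathcal{F}$-measurable, hence unchanged; a conditional Fubini argument (with the $L^1$ bound on $\tilde\mu_s$) turns the drift term into $\int_0^t\tfrac12\langle u(s),\tr[(\nu I_d+R(0))\nabla^2 h]\rangle\,\dif s$. For the $\dif B$-integral: conditionally on $\mathcal{F}$ the process $B$ is still a Brownian motion for the enlarged filtration $\{\mathcal{F}\vee\mathcal{G}_t\}_t$ (by independence of $B$ and $V$) and the integrand is adapted to it, so this martingale has zero conditional expectation. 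The remaining — and most delicate — term is $\mathbf{E}\bigl[\int_0^t\langle\tilde\mu_s,\nabla h\cdot V(\dif s)\rangle\mid\mathcal{F}\bigr]$: here I would approximate the It\^o integral by its left-endpoint sums $\sum_k\langle\tilde\mu_{s_k},\nabla h\cdot(V(s_{k+1})-V(s_k))\rangle$; since each increment $V(s_{k+1})-V(s_k)$ is $\mathcal{F}$-measurable it factors out of $\mathbf{E}[\,\cdot\mid\mathcal{F}]$, and since $\tilde\mu_{s_k}$ is a functional of $(V|_{[0,s_k]},B|_{[0,s_k]})$ with $B\perp V$ one has $\mathbf{E}[\tilde\mu_{s_k}\mid\mathcal{F}]=\mathbf{E}[\tilde\mu_{s_k}\mid\mathcal{F}_{s_k}]$ — so $u(s_k)$ is $\mathcal{F}_{s_k}$-measurable and independent of the increment. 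Consequently the conditioned sums equal $\sum_k\langle u(s_k),\nabla h\cdot(V(s_{k+1})-V(s_k))\rangle$, which converge in $L^2(\mathbf{P})$ to the It\^o integral $\int_0^t\langle u(s),\nabla h\cdot V(\dif s)\rangle$ now taken in the $\{\mathcal{F}_s\}$-filtration; continuity of $\mathbf{E}[\,\cdot\mid\mathcal{F}]$ on $L^2$ lets me pass the conditional expectation through the limit. This yields \eqref{eq:generalizedsolution}, and the initial condition \eqref{eq:ic} follows since $\varphi_{0,t}\to\mathrm{id}$ locally uniformly as $t\downarrow0$, so $\tilde\mu_t\to\mu_0$ and hence $u(t)=\mathbf{E}[\tilde\mu_t\mid\mathcal{F}]\to\mu_0$ in the relevant weighted topology.

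\emph{Step 3 (uniqueness), and the main obstacle.} By linearity it suffices to show that a generalized solution $v$ of \eqref{eq:utdef} with vanishing initial data is identically zero. My preferred route is an energy estimate: applying the It\^o formula to $\|v(t)\|^2$ in a suitable weighted $L^2$ space (after a mollification to make the computation legitimate for the distribution-valued $v$), the drift of \eqref{eq:utdef} contributes $-\langle\nabla v,(\nu I_d+R(0))\nabla v\rangle$ while the It\^o correction of the noise term $-\nabla\cdot[v\,V(\dif t)]$ contributes $+\langle\nabla v,R(0)\nabla v\rangle$ (plus lower-order terms of size $\le C\|v\|^2$, the first-order pieces dropping out because $R$ and its first derivatives at the origin are constant); the $R(0)$-terms cancel, leaving
\[
\dif\|v(t)\|^2\le -\nu\|\nabla v(t)\|^2\,\dif t+C\|v(t)\|^2\,\dif t+\dif M(t),
\]
where $M$ is a local martingale, so that $\mathbf{E}\|v(t)\|^2=0$ by Gronwall's inequality and hence $v\equiv0$. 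I expect the main obstacle to be precisely the rigor here — establishing enough a priori regularity and integrability of a generalized solution to run the energy argument (or an It\^o--Wentzell duality against the backward flow), together with the weighted-norm bookkeeping and the behavior as $t\downarrow0$. This, however, is exactly the kind of statement covered by Kunita's generalized-solution theory \cite[\S3]{Kun94gsaspde}, \cite[\S6.2]{Kun97}, which can be invoked directly for both existence and uniqueness in the class we work in, so that the content specific to the present proposition reduces to the coefficient identification carried out in Steps 1--2.
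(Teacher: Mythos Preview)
Your argument is correct and arrives at the same equation, but the route differs from the paper's. The paper does not derive an SPDE for $\tilde\mu_t$ and then condition; instead it first rewrites the flow \eqref{eq:flowSDE} in Stratonovich form (picking up a $-\tfrac12(\nabla\cdot R)(0)$ drift), identifies the setup with the framework of \cite[Theorem~3.1]{Kun94gsaspde} (which already packages the flow-to-SPDE step \emph{and} the conditional expectation onto $\mathcal{F}$ into one black box), obtains a Stratonovich SPDE for $u$, and then converts back to It\^o using the local-characteristic computation on \cite[p.~302]{Kun97}; the various $(\nabla\cdot R)(0)$ terms cancel and \eqref{eq:utdef} emerges. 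Your approach stays in It\^o throughout and handles the projection $\mathbf{E}[\,\cdot\mid\mathcal{F}]$ by hand via Riemann sums, which is more self-contained and avoids the Stratonovich bookkeeping, at the cost of having to justify the filtering identity $\mathbf{E}\bigl[\int_0^t\langle\tilde\mu_s,\nabla h\cdot V(\dif s)\rangle\,\big|\,\mathcal{F}\bigr]=\int_0^t\langle u(s),\nabla h\cdot V(\dif s)\rangle$ directly. The paper's route buys a clean citation for both existence and uniqueness in one stroke; yours buys transparency about where each term comes from. For uniqueness both approaches ultimately rest on Kunita's theory, as you note.
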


\begin{rem}
	In the sequel, we will use the standard abuse of notation and write
	\cref{eq:ic} as $u(0)=\mu(0)$, even if $\mu(0)$ does not have
	a density.
\end{rem}

\begin{proof}
	\newcommand{\kunitanotation}{\mathsf}
	We will derive \cref{eq:utdef} by applying \cite[Theorem~3.1]{Kun94gsaspde}. In order to use this theorem, we must show how our problem fits into the framework of \cite{Kun94gsaspde}. This is done via the following list of correspondences, in which the left-side quantities (also written in sans-serif type to avoid confusion with the notation used in the present paper) are the notations of \cite{Kun94gsaspde} and the right-side quantities are our notations:
	\begin{align}
		\kunitanotation{W}^i     & = -V_i,\qquad 1\le i\le d;\label{eq:Wi}                                                                                                                                                                                                                 \\
		\kunitanotation{W}^{d+1} & = -\nabla\cdot V;\label{eq:Wd1}                                                                                                                                                                                                                         \\
		\kunitanotation{B}(x,t)  & =-B(t)+\frac12(\nabla\cdot R)(0)t;\label{eq:B}                                                                                                                                                                                                          \\
		\kunitanotation{L}       & = \frac12\nu \Delta + \frac12(\nabla\cdot R)(0)\cdot\nabla, \text{ i.e. } \kunitanotation{a}^{ij}\equiv\nu\delta_{ij},\ \kunitanotation{b}^i \equiv \frac12\sum_{j=1}^d\frac{\partial R_{ij}}{\partial x_j}(0),\ \kunitanotation{d}\equiv0;\label{eq:L} \\
		\kunitanotation{X}       & = \mu(0);\label{eq:X}                                                                                                                                                                                                                                   \\
		\kunitanotation{G}       & \equiv 0.\label{eq:Gd}
	\end{align}
	Here and henceforth, by $(\nabla\cdot R)(0)$ we denote the vector with the $i$th component $\sum_{j=1}^{d}\frac{\partial R_{ij}}{\partial x_{j}}(0)$.

	As \cite{Kun94gsaspde} works with Stratonovich rather than Itô integrals,
	we rewrite \cref{eq:flowSDE} in the Stratonovich form. %
	Using the Itô--Stratonovich
	correction given in \cite[Theorem 3.2.5]{Kun97}, we have
	\begin{align}
		\varphi_{s,t}(x) & =x+\int_s^t V( \dif r, \varphi_{s,r}(x))+B(t)-B(s)\nonumber \\&=x+\int_s^t V(\circ \dif r,\varphi_{s,r}(x))+B(t)-B(s)-\frac12(\nabla\cdot R)(0)(t-s),\label{eq:phistratonovich}
	\end{align}
	or equivalently
	\[
		\dif_t\varphi_{s,t}(x)=V(\circ \dif t,\varphi_{s,t}(x))+B(\dif t)-\frac{1}{2}(\nabla\cdot R)(0)\dif t.
	\]
	Using the correspondences \cref{eq:Wi}--\cref{eq:L}, we see that \cref{eq:phistratonovich} matches \cite[(3.4)]{Kun94gsaspde}, with $\kunitanotation{\psi}=\varphi$.

	Now using
	the differentiation rule \cite[(3.3.21)]{Kun97} we have that
	\[
		\dif_t[D\varphi_{s,t}(x)]=\dif(DV)(\circ\dif t,\varphi_{s,t}(x))\cdot D\varphi_{s,t}(x).
	\]
	Therefore, by the Jacobi formula and the chain rule for Stratonovich
	integrals, we have
	\begin{align*}
		\dif_t[\det D\varphi_{s,t}(x)] & =\tr[(\adj D\varphi_{s,t}(x))DV(\circ\dif t,\varphi_{s,t}(x))\cdot D\varphi_{s,t}(x)] \\
		                               & =\det D\varphi_{s,t}(x)(\nabla\cdot V)(\circ\dif t,\varphi_{s,t}(x)),
	\end{align*}
	where $\adj$ denotes the classical adjoint (adjugate) matrix. This implies that
	\[
		\dif_t[\log\det D\varphi_{s,t}(x)]=(\nabla\cdot V)(\circ\dif t,\varphi_{s,t}(x)),
	\]
	so
	\begin{equation}
		\det D\varphi_{s,t}(x)=\exp\left\{ \int_{0}^{t}(\nabla\cdot V)(\circ\dif r,\varphi_{s,r}(x))\right\},\label{eq:detasexp}
	\end{equation}%
	Therefore, recalling \cref{eq:Wd1} and \cref{eq:L}, we have
	\begin{equation}
		\kunitanotation{\gamma}_{s,t}(x) = \frac{1}{\det D\varphi_{s,t}(x)},\qquad \kunitanotation{X}(t) = \frac{\mu(0)}{\det D\varphi_{0,t}},
	\end{equation}
	with the left sides in the notation of \cite[(3.5)--(3.6)]{Kun94gsaspde} and the right sides in our notation.

	Now we see that \cite[Theorem~3.1]{Kun94gsaspde} applies, and it tells us that %
	\begin{equation}
		\begin{aligned}
			\mu(t) & = \mu(0)+\int_0^t \left[\frac12\nu\Delta \mu(s)+\frac{1}{2}(\nabla\cdot R)(0)\cdot \nabla \mu(s)\right]\,\dif s -\int_0^t \nabla \mu(s)\cdot V(\circ \dif s) \\&\qquad-\int_0^t\mu(s)(\nabla\cdot V)(\circ\dif s).
		\end{aligned}\label{eq:mustratproblem}
	\end{equation}
	Since $u(t)$ is the density of $\mu(t)$,
	the same equation holds for $u$.

	To complete the proof, it remains to convert \cref{eq:mustratproblem} into an Itô integral equation by subtracting the appropriate correction term. This computation is carried out on in \cite[p.~302]{Kun97}. Again using a sans-serif font for the notation there, we have $\kunitanotation{F^i} = -V_i$ for $i=1,\ldots,d$ and $\kunitanotation{F^{d+1}} = -\nabla\cdot V$. Thus we have the ``local characteristic''
	\begin{equation}
		\kunitanotation{A^{ij}}(x,y,t) = \begin{cases}
			R_{ij}(x-y),                                                                            & 1\le i,j\le d;       \\
			-\sum_{k=1}^d \frac{\partial R_{ik}}{\partial x_k}(x-y)=-(\nabla\cdot R)_i(x-y), & 1\le i\le d,\ j=d+1; \\
			\sum_{k=1}^d \frac{\partial R_{kj}}{\partial x_k}(x-y),                                 & i=d+1,\ 1\le j\le d; \\
			-\sum_{k,\ell=1}^d \frac{\partial^2 R_{k\ell}}{\partial x_k\partial x_\ell}(x-y),       & i=j=d+1.             \\
		\end{cases}
	\end{equation}
	We also have the auxiliary functions
	\begin{align*}
		\kunitanotation{C}^j(x,t) & = \sum_{i=1}^d \frac{\partial \kunitanotation{A}^{ij}}{\partial \kunitanotation{y}^i}(x,y,t)|_{y=x} = -\sum_{i=1}^d \frac{\partial R_{ij}}{\partial x_i}(0)=(\nabla\cdot R)_j(0) \\
		\kunitanotation{D}(x,t)   & = \sum_{i=1}^d \frac{\partial \kunitanotation{A}^{i,d+1}}{\partial y^i}(x,y,t)|_{y=x}=\sum_{k,\ell=1}^d\frac{\partial R_{k\ell}}{\partial x_k\partial x_\ell}(0).
	\end{align*}
	Thus,  \cite[p.~302, (3)]{Kun97} becomes in our setting
	\begin{align*}
		\kunitanotation{\tilde{L}}u & =\frac12\sum_{i,j=1}^d R_{ij}(0)\frac{\partial^2u}{\partial x_i\partial x_j}+\sum_{i=1}^d \left(-(\nabla\cdot R)_i +\frac{1}{2}(\nabla\cdot R)_i \right)\frac{\partial u}{\partial x_i} \\%
		                            & =\frac12\sum_{i,j=1}^d R_{ij}(0)\frac{\partial^2u}{\partial x_i\partial x_j}-\frac12 (\nabla\cdot R)\cdot\nabla u,                                                                      %
	\end{align*}
	and so by \cite[p.~302, (4)]{Kun97}, we have
	\begin{align}
		\mu(t) & = \mu(0)+\int_0^t \left[\frac12\nu\Delta \mu(s)+\frac{1}{2}(\nabla\cdot R)(0)\cdot \nabla u(s)+\kunitanotation{\tilde{L}}u(s)\right]\,\dif s -\int_0^t \nabla \mu(t)\cdot V(\dif s)\notag \\&\qquad-\int_0^t\mu(s)(\nabla\cdot V)(\dif s)\notag\\
		       & = \mu(0)+\int_0^t \frac12\tr[(\nu I_d +R(0))\nabla^2 \mu(s)]\,\dif s -\int_0^t \nabla \mu(t)\cdot V(\dif s)-\int_0^t\mu(s)(\nabla\cdot V)(\dif s).
	\end{align}
	Thus $u$ satisfies the Itô SPDE \cref{eq:utdef}.
\end{proof}

\subsection{The second-moment PDE\label{subsec:second-moment-PDE}}

As described in the introduction, we now want to write a PDE for the
second moments of $u(t)$. In this section, we do this using the Itô formula, and then perform a change of variables to simplify the resulting PDE. We first consider
\begin{equation}\label{e.defu2}
	u_2(t,x,y)=u(t,x)u(t,y).
\end{equation}
Since $u(t,\cdot)$ is the quenched density of $X(t)$, we know that if $u(0)$ is a delta, then $u_2(t,x,y)$ is the joint quenched density of $(X(t),Y(t))$ with
\begin{equation}\label{eq:defXYt}
	\begin{aligned}
		 & X(t)=X(0)+B_1(t)+\int_{0}^{t} V(\dif s,X(s)), \\
		 & Y(t)=Y(0)+B_2(t)+\int_0^t V(\dif s, Y(s)),
	\end{aligned}
\end{equation}
where $B_1,B_2$ are independent Brownian motions that are also independent from $V$. Thus, $u_2$ encodes the correlation of the two passive scalars in the same random environment.
From the Itô formula and the SPDE \cref{eq:utdef} (or, in the case when $u(0)$ is a delta, by redoing the
computation in \cref{prop:utsolvesSPDE} but for a flow on $\mathbf{R}^{2d}$
where the first and last $d$ coordinates are forced by the same instance of
$V$ but two independent Brownian motions $B_1$ and $B_2$), we see that $u_2$
satisfies the SPDE
\begin{equation}
	\begin{aligned}\dif u_2(t,x,y) & =\frac{1}{2}\tr[(\nu I_{d}+R(0))^{\otimes2}\nabla^{2}u_2](t,x,y)\dif t-u(t,x)\nabla\cdot[u(t,y) V(\dif t,y)]-u(t,y)\nabla\cdot[u(t,x)V(\dif t,x)] \\
                               & \qquad+ \sum_{i,j=1}^{d}\frac{\partial^{2}}{\partial x_{i}\partial y_{j}}(u_2(t,x,y)R_{ij}(x-y))\dif t,
	\end{aligned}
	\label{eq:utotimes2}
\end{equation}
again in the sense of \cref{eq:generalizedsolution}. %
If we define
\[
	Q_{t}(x,y)=\mathbf{E}u_2(t,x,y),
\]
then $Q_{t}$ lives in polynomially-weighted Sobolev
space by \cite[Lemma 3.1]{Kun94gsaspde}. By definition, $Q_t$ is the annealed density of $(X(t),Y(t))$ defined in \cref{eq:defXYt}. Now we take expectations
in \cref{eq:utotimes2}. Rigorously, this could be done by using \cite[Theorem 3.1]{Kun94gsaspde}
again, but this time taking conditional expectation with respect to
the null filtration. In this way, we see that $Q_{t}$, considered
as a tempered distribution, is the unique solution to the PDE
\begin{align}
	\partial_{t}Q_{t}(x,y) & =\frac{1}{2}\tr[\nabla^{2}[(\nu I_{d}+R(0))^{\otimes2}Q_{t}]](x,y)+ \sum_{i,j=1}^{d}\frac{\partial^{2}}{\partial x_{i}\partial y_{j}}(Q_{t}(x,y)R_{ij}(x-y)),\qquad t>0;\label{eq:QPDE} \\
	Q_{0}(x,y)             & =u_0(x)u_0(y)\label{eq:Qic}
\end{align}
in the ``generalized'' sense of \cite[(2.1)]{Kun94gsaspde} (which
means that the corresponding integral equation holds when $Q_{t}$
is integrated against a Schwartz test function).

Now we make change of variables
\begin{equation}
	x\mapsto w+z/2,\qquad y\mapsto w-z/2,\label{eq:xychgvar}
\end{equation}
and put
\[
	S_{t}(w,z)=Q_{t}(w+z/2,w-z/2).
\]
With $X(t),Y(t)$ defined in \cref{eq:defXYt}, we further define the center of mass and the relative distance by \begin{equation}\label{eq:WZdefs}
	W(t)=(X(t)+Y(t))/2, \quad\quad Z(t)=X(t)-Y(t),
\end{equation} so $S_t(w,z)$ is the annealed  density of $(W(t),Z(t))$.
Define the matrix $A(z)$ by
\begin{equation}
	\begin{aligned}A(z) & =\begin{pmatrix}A_{11} & A_{12} \\
               A_{21} & A_{22}
		\end{pmatrix}(z)=\frac{1}{2}\begin{pmatrix}\frac{1}{2}I_{d} & \frac{1}{2}I_{d} \\
               I_{d}            & -I_{d}
		\end{pmatrix}\left[\nu I_{2d}+\begin{pmatrix}R(0)      & R(z)^{\mathrm{T}} \\
               R(z) & R(0)
			\end{pmatrix}\right]\begin{pmatrix}\frac{1}{2}I_{d} & I_{d}  \\
               \frac{1}{2}I_{d} & -I_{d}
		\end{pmatrix} \\
                    & =\begin{pmatrix}\frac{1}{4}\left[\nu I_{d}+R(0)\right]+\frac{1}{8}\left[R(z)+R(z)^{\mathrm{T}}\right] & \frac{1}{4}\left[R(z)^{\mathrm{T}}-R(z)\right]  \\
               \frac{1}{4}\left[R(z)-R(z)^{\mathrm{T}}\right]                                   & \nu I_{d}+R(0)-\frac12\left[R(z)+R(z)^{\mathrm{T}}\right]
		\end{pmatrix}.
	\end{aligned}
	\label{eq:Adef}
\end{equation}
Then from \cref{eq:QPDE} we obtain
\begin{align}
	\partial_{t}S_{t}(w,z) & =\mathcal{L}^{*}S_{t}(w,z)=\tr\left[\nabla^{2}(AS_{t})(w,z)\right];\label{eq:S-PDE} \\
	S_{0}(w,z)             & =u_{0}(w+z/2)u_{0}(w-z/2),\label{eq:Sic}
\end{align}
where we have defined the differential operator
\begin{equation}
	\mathcal{L}f(w,z)=\tr[A(z)\nabla^{2}f(w,z)],
	\label{eq:Ldef}
\end{equation}
and $\mathcal{L}^{*}$ its adjoint
\[
	\mathcal{L}^{*}f(w,z)=\tr[\nabla^{2}(Af)(w,z)],
\]
where we use the notation, if $A=(a_{ij})$,
\[
	\tr[\nabla^{2}(Af)]=\sum_{i,j=1}^{d}\frac{\partial^{2}}{\partial x_{i}\partial x_{j}}[a_{ij}f].
\]
We emphasize that \cref{eq:S-PDE}--\cref{eq:Sic} is simply a deterministic
change of variables from \cref{eq:QPDE}--\cref{eq:Qic}. Alternatively, one could start from \cref{eq:defXYt} to write down the equation satisfied by $(W(t),Z(t))$, then derive the PDE  satisfied by its annealed density, which is \cref{eq:S-PDE}.

If $X:\mathbf{R}^{d}\to\mathbf{R}^{d}$ is a stationary Gaussian random
field with correlation function $\mathbf{E}[X(z)X(0)^{\mathrm{T}}]=R(z)$,
then we have
\[
	\mathbf{E}\left[\begin{pmatrix}X(z) \\
		X(0)
	\end{pmatrix}\begin{pmatrix}X(z)^{\mathrm{T}} & X(0)^{\mathrm{T}}\end{pmatrix}\right]=\begin{pmatrix}R(0)  & R(z) \\
               R(-z) & R(0)
	\end{pmatrix}=\begin{pmatrix}R(0)              & R(z) \\
               R(z)^{\mathrm{T}} & R(0)
	\end{pmatrix},
\]
so the matrix on the right is nonnegative-definite, and thus from \cref{eq:Adef} we conclude that $A(z)$
is positive-definite uniformly over all $z\in\mathbf{R}^{d}$. By the assumption  $R\in \mathcal{C}_{\mathrm{c}}^{\infty}$,  $A(z)$ is also smooth in $z$.

Now by the theory of parabolic PDEs (which relies on the ellipticity
of $A$; see e.g.~\cite[§1.6]{Fri64}), we know that the PDE \cref{eq:S-PDE}
has a fundamental solution. Thus, \cref{eq:S-PDE}--\cref{eq:Sic} has
a classical solution given by integration of the initial measure against
the fundamental solution. Since it is clear that this classical solution
is also a tempered distribution and satisfies \cref{eq:S-PDE}--\cref{eq:Sic}
in the ``generalized'' sense of Kunita \cite{Kun94gsaspde}, for
which there is a uniqueness statement, the function $S_{t}$ in fact
is given by integration of the initial condition \cref{eq:Sic} against
the fundamental solution. In the sequel, we mean this solution when
we talk about ``the'' solution to \cref{eq:S-PDE}--\cref{eq:Sic}.
(Any other solution must have extremely fast growth
as $|x|\to\infty$.)

\subsubsection{Bounds on the fundamental solution\label{subsec:PDEbounds}}

For notational convenience, we will often write $\omega=(w,z)$. Let
$\Gamma_{t}$ be the fundamental solution for \cref{eq:S-PDE}, so that
the solution to \cref{eq:S-PDE} satisfies
\[
	S_{t}(\omega)=\int\Gamma_{t-s}(\omega;\omega')S_{s}(\omega')\,\dif\omega'
\]
for $s<t$ and $\omega\in\mathbf{R}^{2d}$. We note that $\Gamma_{t}$
is the fundamental solution for the non-divergence form parabolic
PDE
\begin{equation}
	\partial_{t}g=\mathcal{L}g,
	\label{eq:nondivergenceformPDE}
\end{equation}
with its arguments swapped, i.e.,
\[
	g_{t}(\omega)=\int\Gamma_{t-s}(\omega';\omega)g_{s}(\omega')\,\dif\omega'.
\]
In this section we will prove some bounds
on $\Gamma_{t}$ using tools from the theory of parabolic PDE, in particular the bounds
on the fundamental solutions of nondivergence-form parabolic PDEs given in \cite{Esc00}.

Recall from \cref{eq:Adef} that
\begin{equation}\label{eq:defA22}
	A_{22}(z)=\nu I_{d}+R(0)-\frac12\left[R(z)+R(z)^{\mathrm{T}}\right].
\end{equation}
We first need the
following proposition, which will also be useful later.
\begin{prop}
	\label{prop:bexists}There is a unique function $\chi\in\mathcal{C}^{\infty}(\mathbf{R}^{d};\R)$
	and a constant $C<\infty$ so that
	\begin{equation}
		\tr[\nabla^{2}(A_{22}\chi)]\equiv0,\label{eq:bsolves-1}
	\end{equation}
	\begin{equation}
		\chi-1\in L^{p}(\mathbf{R}^{d})\qquad\text{for any }p>1,\label{eq:babout1-inLp}
	\end{equation}
	\begin{equation}
		C^{-1}\le\inf_{\mathbf{R}^{d}}\chi\le\sup_{\mathbf{R}^{d}}\chi\le C,\label{eq:bboundedaboveandawayfromzero-1}
	\end{equation}
	and
	\begin{equation}
	 |\chi(x)-1|\le C|x|^{-d}\text{ for all $x$ with $|x|\ge 1$.}\label{eq:powerlaw}
	\end{equation}

\end{prop}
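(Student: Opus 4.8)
The plan is to construct $\chi$ as $1$ plus a decaying correction, i.e.\ to look for $\chi = 1 + \psi$ with $\psi$ solving the divergence-form equation $\tr[\nabla^2(A_{22}\psi)] = -\tr[\nabla^2(A_{22} - (\nu I_d + R(0)))]$, noting that $A_{22}(z) - (\nu I_d + R(0)) = -\tfrac12[R(z) + R(z)^{\mathrm{T}}]$ is smooth and compactly supported (since $R \in \mathcal{C}_{\mathrm{c}}^\infty$), so the right-hand side $f(z) := \tfrac12\sum_{i,j}\partial_i\partial_j[R_{ij}(z) + R_{ji}(z)]$ is a fixed element of $\mathcal{C}_{\mathrm{c}}^\infty$. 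The operator $\mathcal L_{22}^*\psi := \tr[\nabla^2(A_{22}\psi)]$ is a uniformly elliptic divergence-form operator on $\mathbf{R}^d$ with smooth bounded coefficients (uniform ellipticity of $A_{22}$ follows from the same nonnegative-definiteness argument used for $A(z)$ in the excerpt, applied to the Schur complement / lower-right block). I would first solve this equation for $\psi$, then verify the four stated properties, and finally address uniqueness.

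For existence and decay I would use the fundamental solution (Green's function) $\Lambda$ of $\mathcal L_{22}^*$ on $\mathbf{R}^d$ and set $\psi = \Lambda * f$ in the appropriate sense; for $d \ge 3$ this is the standard Newtonian-type convolution, while for $d = 1, 2$ one must be slightly careful and instead write $\psi$ via the Green's function for the operator acting on the right data, exploiting that $f$ has \emph{mean zero} in the sense that $f = \tr[\nabla^2 g]$ for the compactly supported $g = -\tfrac12[R + R^{\mathrm{T}}]$, which kills the logarithmic/linear growth. Concretely, writing $\psi(x) = -\sum_{i,j}\int \partial_i\partial_j\Lambda(x,y)\, g_{ij}(y)\,\dif y$ (distributional derivatives on the compactly supported $g$), the classical Gaussian-type bounds on $\Lambda$ and its derivatives (as in \cite{Fri64} or the nondivergence-form estimates of \cite{Esc00}, transferred to divergence form, or simply Schauder/Calderón–Zygmund theory since the coefficients are smooth) give $|\partial_i\partial_j\Lambda(x,y)| \lesssim |x-y|^{-d}$ for $|x - y| \ge 1$; integrating against the compactly supported $g$ then yields $|\psi(x)| \le C|x|^{-d}$ for $|x| \ge 1$, which is exactly \eqref{eq:powerlaw}, and immediately gives $\psi \in L^p$ for all $p > 1$ (since $|x|^{-d} \in L^p(\{|x| \ge 1\})$ iff $p > 1$, and $\psi$ is bounded near the origin by elliptic regularity), which is \eqref{eq:babout1-inLp}. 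Smoothness of $\psi$ (hence of $\chi$) is local elliptic regularity since $f \in \mathcal C^\infty$ and the coefficients are smooth. For \eqref{eq:bboundedaboveandawayfromzero-1}, boundedness above is clear from $\chi = 1 + \psi$ with $\psi$ bounded; boundedness \emph{away from zero} requires a minimum principle argument — $\chi$ is a bounded solution of the divergence-form equation $\mathcal L_{22}^*\chi = 0$ with $\chi \to 1$ at infinity, so by the maximum principle for such operators (and the strong maximum principle to rule out the value $0$ being attained, given $\chi \not\equiv 0$) one gets $\inf \chi > 0$; alternatively one can run a Harnack-chain / De Giorgi–Nash–Moser argument, but since coefficients are smooth the classical strong maximum principle suffices.

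The main obstacle I anticipate is twofold. First, the low-dimensional cases $d = 1, 2$: the naive convolution with the fundamental solution does not decay (or is not even well-defined as an absolutely convergent integral) without carefully using the double-divergence structure of the source $f$, and one must check that the resulting $\psi$ genuinely tends to $0$ (not merely to some nonzero constant) — this is where the sharp $|x|^{-d}$ rate, as opposed to something weaker, really comes from, and getting the constant and the threshold $|x| \ge 1$ clean takes some care with the near-diagonal behavior of $\partial^2\Lambda$. Second, \emph{uniqueness}: I would prove it by showing any two solutions $\chi_1, \chi_2$ satisfying \eqref{eq:bsolves-1} and \eqref{eq:babout1-inLp} have difference $\eta = \chi_1 - \chi_2 \in L^p(\mathbf{R}^d)$ for all $p>1$ solving $\mathcal L_{22}^*\eta = 0$; a Liouville-type theorem for divergence-form elliptic operators with smooth uniformly elliptic coefficients — any solution in $L^p$, $p < \infty$, must vanish identically (it decays at infinity, is bounded by elliptic estimates, and then the maximum principle forces $\eta \equiv 0$) — closes the argument. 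The remaining bookkeeping (verifying the generalized/distributional formulation matches \eqref{eq:bsolves-1}, transferring the nondivergence bounds of \cite{Esc00} to the divergence setting) is routine.
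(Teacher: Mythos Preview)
Your perturbative ansatz $\chi=1+\psi$, with $\psi$ built from the Green's function of $\mathcal L_{22}^*$ acting on the double-divergence source $f=\tr[\nabla^2 g]$ with $g=-\tfrac12[R+R^{\mathrm T}]\in\mathcal C_{\mathrm c}^\infty$, is a legitimate alternative to the paper's proof. The paper does not construct $\chi$ this way: it cites \cite[Theorem~1.1]{Esc00} for existence (and uniqueness up to scalar) of a nonnegative solution, and then establishes \eqref{eq:powerlaw} by an ODE argument on the integrals $f_x(r)=\int_{B_r(x)}\chi$. After a linear change of variables making $A_{22}=I_d+E$ with $E$ supported in $B_M(0)$, an integration-by-parts identity turns the equation into $f_x'(r)=\tfrac{d}{r}f_x(r)+D_2(r,x)-D_1(r,x)$, where $D_1,D_2$ are bounded and are nonzero only where $\partial B_r(x)$ or $B_r(x)$ meets $B_M(0)$; solving the ODE for large $r$ and invoking the mean-value property (since $\chi$ is harmonic outside $B_M$) yields the $|x|^{-d}$ bound. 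Your route delivers the same rate more directly once the estimate $|\partial_y^2\Lambda(x,y)|\lesssim|x-y|^{-d}$ is in hand, but pushes all the work into existence and derivative bounds for $\Lambda$, which --- as you correctly flag --- are delicate in $d\le 2$; the paper's ODE argument is elementary and dimension-uniform for $d\ge 2$.

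There is, however, a genuine gap in your argument for $\inf\chi>0$. Expanding, $\mathcal L_{22}^*\chi=\tr[A_{22}\nabla^2\chi]+2(\nabla\cdot A_{22})\cdot\nabla\chi+\bigl(\tr[\nabla^2 A_{22}]\bigr)\chi$, and the zeroth-order coefficient $\tr[\nabla^2 A_{22}]$ has no sign; equivalently, in divergence form the operator reads $\nabla\cdot[A_{22}\nabla\chi+(\nabla\cdot A_{22})\chi]$, still carrying a drift. Neither the weak nor the strong maximum principle applies to such operators without a sign condition, so ``$\chi\to 1$ at infinity plus the maximum principle'' does not by itself force $\chi>0$ inside $\supp R$, and Harnack/De~Giorgi--Nash--Moser only helps once nonnegativity is already known. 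The paper sidesteps this entirely by taking the nonnegative solution from \cite{Esc00} as its starting point, after which strict positivity on compacts follows from the strong maximum principle. In your framework the cleanest repair is probably the same: obtain a nonnegative solution from \cite[Theorem~1.1]{Esc00} (or from the probabilistic interpretation as an invariant-measure density) and then argue, via a Liouville theorem for bounded solutions of $\mathcal L_{22}^*\cdot=0$, that it must coincide with your $\chi$.
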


\begin{rem}
	\label{rem:easychi}In the case of $R(z)=f(z)I_{d}$ for some scalar function $f\in \mathcal{C}^{\infty}_c(\mathbf{R}^{d};\R)$ (which is always the case in $d=1$), we can take
	\[
		\chi(z)=\frac{\nu+f(0)}{\nu+f(0)-f(z)}
	\]
	which evidently satisfies \cref{eq:bsolves-1}--\cref{eq:bboundedaboveandawayfromzero-1}.
	In fact, it satisfies \cref{eq:babout1-inLp} with $p=1$ as well.
\end{rem}
 
\begin{rem}\label{rem:incompressiblechi}
 In the case when $\sum_{i=1}^d \frac{\partial R_{ij}}{\partial x_i}\equiv 0$ for each $j$ (i.e. when $V$ is incompressible almost surely), it is clear from \cref{eq:defA22} that  $\chi\equiv 1$.
\end{rem}

\begin{rem}From \cref{eq:defA22} and the fact that $R$ is compactly supported, one can view $A_{22}$ as a perturbation of the constant matrix $\nu I_d+R(0)$. Since \cref{eq:bsolves-1} is the equation for the invariant measure of the process $Z(t)$ defined in \cref{eq:WZdefs}, \cref{prop:bexists} is essentially to quantify the fact that the invariant measure is a perturbation of the Lebesgue measure.
\end{rem}

\begin{proof}[Proof of \cref{prop:bexists}]
	By \cref{rem:easychi}, we can assume that $d\ge 2$, so we can use the results of \cite{Esc00}.
	Since $A$ is uniformly positive definite, Theorem~1.1 of \cite{Esc00}
	implies that there is a unique, up to a scalar multiple normalization,
	$\chi:\mathbf{R}^{d}\to\mathbf{R}_{\ge0}$ satisfying \cref{eq:bsolves-1}
	in a weak sense. Using the assumption that $R$ is smooth, \cite[Theorem 1.4.6]{BKRS15}
	ensures that $\chi$ is smooth as well. Therefore, $\chi$ in fact satisfies
	\cref{eq:bsolves-1} in a classical sense.

	Now we need to prove \cref{eq:babout1-inLp} and \cref{eq:bboundedaboveandawayfromzero-1}.
	Our approach is based on the proof of \cite[Theorem 1.5]{Esc00},
	the difference being that we make stronger assumptions and obtain
	stronger results. For the purpose of this proof only, we make a deterministic,
	linear change of coordinates so that we can assume that $\nu I_{d}+R(0)=I_{d}$.
	This does not affect the conclusions of the proposition (up to the
	choice of constants). This means that $A_{22}(z)=I_{d}+E(z)$, where
	$E$ is compactly-supported, say on $B_{M}(0)$ for some $M>0$. Throughout the proof, to simplify the notation we  write $\sum_{i,j}=\sum_{i,j=1}^d$ and $\A=A_{22}$. Now
	define
	\[
		f_{x}(r)=\int_{B_{r}(x)}\chi(z)\,\dif z.
	\]
	Then we claim that
	\begin{equation}\label{eq:intebyP}
		r\int_{B_{r}(x)}\chi(z)\tr \A(z)\,\dif z=\int_{\partial B_{r}(x)}\chi(z)\A(z)(z-x)\cdot(z-x)\,\dif\mathcal{H}^{d-1}(z),
	\end{equation}
	where $\dif \mathcal{H}^{d-1}$ is the surface measure. To show \cref{eq:intebyP}, we write
	\[
		\begin{aligned}
			r\int_{B_{r}(x)}\chi(z)\tr \A(z)\,\dif z & =r\sum_{i,j} \int_{B_r(x)} \chi(z)\A_{ij}(z)\delta_{ij} \dif z                                                          \\
			                                                & =\frac{r}{2}\sum_{i,j} \int_{B_r(x)} \chi(z)\A_{ij}(z) \partial_{z_iz_j} (|z-x|^2-r^2) \dif z                           \\
			                                                & =\frac{r}{2}\sum_{i,j} \int_{B_r(x)} \partial_{z_i} \bigg(\chi(z)\A_{ij}(z)\partial_{z_j}(|z-x|^2-r^2)\bigg) \dif z     \\
			                                                & \qquad-\frac{r}{2}\sum_{i,j}\int_{B_r(x)}\partial_{z_i}\bigg(\chi(z)\A_{ij}(z)\bigg) \partial_{z_j}(|z-x|^2-r^2) \dif z \\&\eqqcolon I_1-I_2.
		\end{aligned}
	\]
	For $I_1$,  we apply the divergence theorem to see that it is equal to the r.h.s. of \cref{eq:intebyP}.
	For $I_2$, by the fact that $\tr[\nabla^{2}(\A\chi)]\equiv0$, we have
	\[
		\begin{aligned}
			I_2=\frac{r}{2}\sum_{i,j} \int_{B_r(x)} \partial_{z_iz_j} \bigg(\chi(z)\A_{ij}(z)(|z-x|^2-r^2)\bigg) \dif z=0
		\end{aligned}
	\]
	where the last identity comes from another application of divergence theorem. So \cref{eq:intebyP} is proved.

	Now we have
	\begin{align}
		f_{x}'(r) & =\int_{\partial B_{r}(x)}\chi(z)\,\dif\mathcal{H}^{d-1}(z)=\frac{1}{r^{2}}\int_{\partial B_{r}(x)}|z-x|^{2}\chi(z)\,\dif\mathcal{H}^{d-1}(z)\nonumber \\
		          & =\frac{1}{r^{2}}\int_{\partial B_{r}(x)}\chi(z)\A(z)(z-x)\cdot(z-x)\,\dif\mathcal{H}^{d-1}(z)-D_{1}(r,x)\nonumber                                     \\
		          & =\frac{1}{r}\int_{B_{r}(x)}\chi(z)\tr \A(z)\,\dif z-D_{1}(r,x)=\frac{d}{r}\int_{B_{r}(x)}\chi(z)\,\dif z+D_{2}(r,x)-D_{1}(r,x)\nonumber               \\
		          & =\frac{d}{r}f_{x}(r)+D_{2}(r,x)-D_{1}(r,x).\label{eq:fDE}
	\end{align}
	where we used the fact that $\A(z)=I_d+E(z)$ and we defined
	\begin{align*}
		D_{1}(r,x) & =\frac{1}{r^{2}}\int_{\partial B_{r}(x)}\chi(z)E(z)(z-x)\cdot(z-x)\,\dif\mathcal{H}^{d-1}(z)               \\
		           & =\frac{1}{r^{2}}\int_{\partial B_{r}(x)\cap B_{M}(0)}\chi(z)E(z)(z-x)\cdot(z-x)\,\dif\mathcal{H}^{d-1}(z).
	\end{align*}
	and
	\[
		D_{2}(r,x)=\frac{1}{r}\int_{B_{r}(x)\cap B_{M}(0)}\chi(z)\tr E(z)\,\dif z.
	\]
	We note that there is a constant $D$, independent of $x$ and $r$,
	so that
	\begin{equation}
		|D_{1}|,r|D_{2}(r,x)|\le D,\qquad r>0,x\in\mathbf{R}^{d}.\label{eq:D1D2bd}
	\end{equation}

	First we consider the case $x=0$. We note that, whenever $r\geq M$, we have  $D_{1}(r,0)=0$  and $D_{2}(r,0)=\frac{M}{r}D_{2}(M,0)$.
	Therefore, we have for $r\ge M$ that
	\[
		f_{0}'(r)=\frac{d}{r}\left[f_{x}(r)+d^{-1}MD_{2}(M,0)\right],
	\]
	so, solving the ODE, we obtain for $r\ge M$ that
	\begin{equation}
		f_{0}(r)=\kappa r^{d}-d^{-1}MD_{2}(M,0)\label{eq:solveforf0}
	\end{equation}
	for some constant $\kappa$.
	We fix the normalization of $\chi$ so that $\kappa$ is the volume
	of the unit ball in $\mathbf{R}^{d}$. In other words, for $r\gg1$, we have
	\[
		\int_{B_r(0)} \chi(z)\,\dif z=f_0(r)\sim \int_{B_r(0)}\1(z)\,\dif z.
	\]

	Now we consider general $x$. From \cref{eq:fDE} we have
	\begin{equation}
		f_{x}'(r)=\frac{d}{r}f_{x}(r)+D_{2}(r,x)-D_{1}(r,x).\label{eq:fxprime}
	\end{equation}
	We note that $D_{1}(r,x)=0$ and $D_{2}(r,x)=\frac{|x|+M}{r}D_{2}(|x|+M,x)$
	whenever $r\ge|x|+M$. Therefore, we have
	\[
		f_{x}'(r)=\frac{d}{r}\left[f_{x}(r)+d^{-1}(|x|+M)D_{2}(|x|+M,x)\right],\qquad r\ge|x|+M,
	\]
	and thus
	\begin{equation}
		f_{x}(r)=\kappa_{x}r^{d}-d^{-1}(|x|+M)D_{2}(|x|+M,x),\qquad r\ge|x|+M,\label{eq:solveforf0-1}
	\end{equation}
	for some constant $\kappa_{x}$. But since $f_{0}(r-|x|)\le f_{x}(r)\le f_{0}(r+|x|)$
	for all $r\ge|x|$, comparing \cref{eq:solveforf0} and \cref{eq:solveforf0-1}
	and taking $r$ large we see that in fact we must have $\kappa_{x}=\kappa$
	for all $x$, and thus
	\begin{equation}
		f_{x}(r)=\kappa r^{d}-d^{-1}(|x|+M)D_{2}(|x|+M,x),\qquad r\ge|x|+M.\label{eq:fxrwhenrislarge}
	\end{equation}
	Using \cref{eq:D1D2bd} in \cref{eq:fxrwhenrislarge}, we have that
	\begin{equation}
		|f_{x}(r)-\kappa r^{d}|\le d^{-1}D,\qquad r\ge|x|+M.\label{eq:fxrclosetokappard}
	\end{equation}

	Now assume that $|x|\ge M+1$. We have from \cref{eq:fxprime} and \cref{eq:D1D2bd}
	that, as long as $r\ge|x|-M$, we have $r\ge1$ and hence
	\[
		\left|\frac{\dif}{\dif r}\left(r^{-d}f_{x}(r)\right)\right|\le Dr^{-d}\left[1+1/r\right]\le2Dr^{-d},
	\]
	so
	\begin{equation}
		\left|(|x|+M)^{-d}f_{x}(|x|+|M|)-(|x|-M|)^{-d}f_{x}(|x|-M)\right|\le4DM(|x|-M)^{-d}.\label{eq:slippasttheball}
	\end{equation}
	Since $\chi$ is harmonic in $B_{|x|-M}(x)$ (recall that $\A(z)=I_d$ for $|z|\geq M$), we have
	\begin{equation}
		(|x|-M|)^{-d}f_{x}(|x|-M)=\kappa\chi(x).\label{eq:harmonic}
	\end{equation}
	Using \cref{eq:fxrclosetokappard} and \cref{eq:harmonic} in \cref{eq:slippasttheball},
	we have
	\begin{equation}
		\kappa\left|1-\chi(x)\right|\le d^{-1}D(|x|+M)^{-d}+4DM(|x|-M)^{-d}.\label{eq:1minusbx}
	\end{equation}

	Now since $\chi$ is smooth, \cref{eq:1minusbx} implies \cref{eq:babout1-inLp}.
	Also, \cref{eq:1minusbx} implies \cref{eq:bboundedaboveandawayfromzero-1} and \cref{eq:powerlaw}
	for large $|x|$. But for $x$ in any bounded domain, \cref{eq:bboundedaboveandawayfromzero-1}
	holds by the smoothness of $\chi$ and $E$ and the strong maximum
	principle. This completes the proof.
\end{proof}
\begin{lem}
	\label{lem:fundsolnbound}There exists a constant $C=C(\nu,R)<\infty$
	so that, for all $t>0$ and all $\omega,\omega'\in\mathbf{R}^{2d}$,
	we have
	\begin{equation}
		\Gamma_{t}(\omega;\omega')\le Ct^{-d}\exp\left\{ -C^{-1}t^{-1}|\omega-\omega'|^{2}\right\} .\label{eq:Gammatbd}
	\end{equation}
\end{lem}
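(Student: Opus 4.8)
The plan is to recognize $\mathcal{L}$ in \eqref{eq:Ldef} as a textbook uniformly parabolic operator in non-divergence form and to invoke the corresponding Gaussian heat-kernel bound. Set $n=2d$, so that the claimed prefactor $t^{-d}$ is $t^{-n/2}$, and view $A(\cdot)$ from \eqref{eq:Adef} as an $\R^{n\times n}$-valued function on $\R^{n}$ (it happens to depend only on the $z$-block of $\omega=(w,z)$, but this plays no role). As recorded at the end of \cref{subsec:second-moment-PDE}, $A$ is smooth and uniformly positive definite; since moreover $R\in\mathcal{C}_{\mathrm{c}}^\infty$, the map $A$ and all of its derivatives are \emph{globally} bounded on $\R^{n}$, and there are constants $0<\lambda\le\Lambda<\infty$ depending only on $\nu$ and $R$ with $\lambda I_n\le A(z)\le\Lambda I_n$ for every $z$ (for instance $\Lambda\le\nu+2\sup_z\|R(z)\|$, and $\lambda$ is comparable to $\nu$, using the factorization $A(z)=\tfrac12 M\,[\nu I_n+\mathcal{R}(z)]\,M^{\mathrm{T}}$ with $\mathcal{R}(z)$ the positive-semidefinite block matrix appearing in \eqref{eq:Adef} and $M$ a fixed invertible matrix). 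Thus $\partial_t g=\mathcal{L}g$ in \eqref{eq:nondivergenceformPDE} is uniformly parabolic, in non-divergence form, with bounded smooth coefficients and \emph{no} lower-order terms.

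For such an operator the Aronson-type estimate $\Gamma_t(\omega;\omega')\le Ct^{-n/2}\exp(-C^{-1}t^{-1}|\omega-\omega'|^2)$, with $C$ depending only on $n$, $\lambda$, $\Lambda$ and $\sup\|\nabla A\|$ (hence only on $\nu$ and $R$), is classical, and I would cite it directly --- from \cite[Ch.~1 and~9]{Fri64} (the parametrix/Levi construction and its bounds) and/or the fundamental-solution estimates of \cite{Esc00}, which make the dependence of $C$ on the ellipticity data transparent. Two remarks on the \emph{form} of the bound, both benign here. First, it holds for all $t>0$ and not merely on a bounded time interval: because $\mathcal{L}$ carries no zeroth-order term, the associated evolution is mass-conserving --- equivalently, the diffusion with generator $\mathcal{L}$, whose transition density is exactly $\Gamma_t$ by the discussion preceding the lemma, is conservative --- so no $\e^{Ct}$ prefactor appears and the short-time parametrix bound propagates to large $t$ in the standard way. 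Second, the bound is insensitive to which argument of $\Gamma$ is frozen: $\Gamma_t(\cdot\,;\omega')$ solves the divergence-type adjoint equation while $\Gamma_t(\omega;\cdot)$ solves \eqref{eq:nondivergenceformPDE}, but with smooth coefficients both satisfy a Gaussian upper bound of the stated shape, so \eqref{eq:Gammatbd} holds as written and may later be used with either argument.

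There is no real obstacle here: the statement is ``verify the hypotheses and quote a standard theorem.'' The only points needing a little care are (i) the uniformity of $C$ in the base point $\omega$, which is exactly where compact support of $R$ is used --- it upgrades ``$A$ and its derivatives are locally bounded'' to ``globally bounded,'' so the constants in the parametrix estimates do not degenerate as $|\omega|\to\infty$ --- and (ii) the bookkeeping of the non-divergence versus divergence roles of $\Gamma$ and its adjoint, as just discussed. A genuine subtlety would arise only if the coefficients were merely measurable, where global-in-time Gaussian upper bounds for non-divergence form operators are delicate; the assumed smoothness of $R$ sidesteps this entirely.
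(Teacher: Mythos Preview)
Your approach is correct in spirit but takes a different route from the paper, and it has a soft spot precisely where the two diverge. The paper does \emph{not} treat the bound as an off-the-shelf Aronson estimate: it invokes \cite[Theorem~1.2]{Esc00}, whose Gaussian upper bound for a non-divergence operator is stated intrinsically in terms of the adjoint invariant density $W$ (roughly $\Gamma_t(\omega;\omega')\lesssim W(\omega')\big/\!\int_{B_{\sqrt t}(\omega')}W\cdot\exp\{-|\omega-\omega'|^2/(Ct)\}$). The paper then identifies $W$ with $\tilde\chi(w,z)\propto\chi(z)$ from \cref{prop:bexists} and uses the two-sided bounds \cref{eq:bboundedaboveandawayfromzero-1} to turn the intrinsic prefactor into the explicit $t^{-d}$, valid for \emph{all} $t>0$. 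In other words, Proposition~\ref{prop:bexists} is doing the real work in the paper's proof of the lemma.

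Your proposal bypasses $\chi$ entirely, which is fine in principle, but the references you name do not give the global-in-$t$ statement without exactly this extra input. The parametrix bounds in \cite[Ch.~1,~9]{Fri64} are stated on finite time intervals with constants depending on the terminal time; your remark that ``the short-time parametrix bound propagates to large $t$ in the standard way'' via mass conservation is not quite an argument---naively iterating an upper bound $p_s\le C\,\mathcal{G}_{Cs}$ through Chapman--Kolmogorov multiplies the constant at each step. And citing \cite{Esc00} brings you back to needing $W\asymp 1$, i.e.\ to \cref{prop:bexists}. A route that genuinely avoids $\chi$ would combine, say, a global on-diagonal bound (via Nash/Krylov--Safonov for the conservative martingale diffusion with generator $\mathcal{L}$) with an exponential-martingale off-diagonal estimate; that works, but it is not what is in the two references you cite. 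The paper's route is the cleaner one here because $\chi$ is already constructed and is needed independently (it is the two-point function of $U$ in \cref{eq:UCov}).
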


\begin{proof}
	Recall that we write $\omega=(w,z)$ and $\omega'=(w',z')$. Let %
	\[
		\tilde{\chi}(\omega)=\tilde{\chi}(w,z)=\frac{\chi(z)}{\int_{|\omega'|\leq 1} \chi(z')\,\dif w'\dif z'},
	\]
	with $\chi$ as in \cref{prop:bexists}, which satisfies $\mathcal{L}^{*}\tilde{\chi}=0$
	by \cref{eq:bsolves-1}, and also satisfies $\int_{|\omega|^2\leq 1}\tilde{\chi}(\omega)\,\dif\omega=1$.
	By \cref{eq:bboundedaboveandawayfromzero-1}, there is a constant $C=C(\nu,R)<\infty$
	so that for all $t>0$ and all $\omega\in\mathbf{R}^{2d}$, we have
	\[
		\tilde{\chi}(\omega)\le C
	\]
	and
	\[
		\int_{B_{t}(\omega)}\tilde{\chi}(\omega')\,\dif\omega'\ge C^{-1}t^{2d}.
	\]
	Using these bounds in the result of \cite[Theorem 1.2]{Esc00} (noting
	that our $\tilde{\chi}$ is denoted there by $W$), we have another
	constant $C=C(\nu,R)$ so that, for all $t>0$ and all $\omega,\omega'\in\mathbf{R}^{2d}$,
	the estimate \cref{eq:Gammatbd} holds. Note that \cite{Esc00} is written
	in terms of the nondivergence form PDE \cref{eq:nondivergenceformPDE},
	but the fundamental solutions are related by simply swapping the arguments
	and so the same bound holds for $\Gamma_{t}$. The proof is complete.
	\end{proof}
	\begin{rem}
	Note that \cite{Esc00} assumes that the dimension $d$ is at least $2$, but the proof of the upper bound in \cite[Theorem~1.2]{Esc00} given there works also for $d=1$. Actually, the proof is in fact simpler as it follows just from the Krylov--Safonov Harnack inequality \cite[Theorem~3.1]{Esc00} and the construction of a subsolution \cite[Lemma~3.1]{Esc00} as in the derivation leading to \cite[(3.8)]{Esc00}, using the explicit construction of the invariant measure given in \cref{rem:easychi} and the fact that in $d=1$, what \cite{Esc00} calls a ``normalized adjoint solution'' is in fact just a solution to the original nondivergence-form equation.
\end{rem}

\begin{lem}
	\label{lem:fundsolnderivbound}
	There exists  a constant $C=C(\nu,R)<\infty$
	so that, for all $t\ge1$ and all $\omega,\omega'\in\mathbf{R}^{2d}$, we have
	\begin{equation}
		\left|\nabla_{\omega'}^{2}\Gamma_{t}(\omega;\omega')\right|\le Ct^{-d}\exp\left\{ -C^{-1}t^{-1}|\omega-\omega'|^{2}\right\} .\label{eq:GammaHessBd}
	\end{equation}
\end{lem}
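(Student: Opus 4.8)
The plan is to obtain the Hessian bound on $\Gamma_t$ from the pointwise bound of \cref{lem:fundsolnbound} by combining interior parabolic (Schauder) estimates with the semigroup property. The key point is that we have a good pointwise bound on $\Gamma_t$ itself, and we want to upgrade it to a bound on two spatial derivatives in the ``backward'' variable $\omega'$, but only for $t\ge 1$; the restriction $t\ge 1$ is exactly what lets us avoid the singularity of derivative bounds as $t\downarrow 0$.

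First I would recall that $\Gamma_t(\omega;\cdot)$, as a function of the backward spatial variable and backward time, solves the adjoint (divergence-form) equation $\partial_s v = \mathcal{L}^* v = \tr[\nabla^2(Av)]$, with coefficients $A(z)$ that are smooth, bounded, and uniformly elliptic by the discussion following \cref{eq:Adef}. Equivalently, fixing $\omega$, the function $(s,\omega')\mapsto \Gamma_s(\omega;\omega')$ is a nonnegative solution of a uniformly parabolic equation with smooth bounded coefficients on $(0,\infty)\times\mathbf{R}^{2d}$. I would then use the semigroup identity
\begin{equation}
	\Gamma_{t}(\omega;\omega')=\int_{\mathbf{R}^{2d}}\Gamma_{t-1/2}(\omega;\omega'')\,\Gamma_{1/2}(\omega'';\omega')\,\dif\omega''\label{eq:chapmankolmogorov}
\end{equation}
(valid for $t\ge 1$), and differentiate twice in $\omega'$ under the integral sign, moving the derivatives onto the second factor: $\nabla_{\omega'}^2\Gamma_t(\omega;\omega') = \int \Gamma_{t-1/2}(\omega;\omega'')\,\nabla_{\omega'}^2\Gamma_{1/2}(\omega'';\omega')\,\dif\omega''$. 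Here $\nabla^2_{\omega'}\Gamma_{1/2}(\omega'';\cdot)$ is the Hessian of a fundamental solution at the fixed time $1/2$, which is bounded (with Gaussian spatial decay in $|\omega''-\omega'|$) by the standard parabolic theory of \cite{Fri64} — at a fixed positive time, the derivatives of the fundamental solution of a smooth-coefficient uniformly parabolic equation satisfy Gaussian-type estimates. Then the desired bound follows from the convolution of two Gaussian-type kernels, using \cref{eq:Gammatbd} for the first factor and the fixed-time derivative bound for the second, exactly as in the elementary estimate $\int t^{-d}e^{-c|u-v|^2/t}\cdot e^{-c|v-w|^2}\,\dif v \lesssim t^{-d}e^{-c'|u-w|^2/t}$ for $t\ge 1$.

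An alternative, perhaps cleaner, route avoids \cite{Fri64} and uses only interior Schauder estimates: fix $\omega$ and set $v(s,\omega')=\Gamma_s(\omega;\omega')$. For $t\ge 1$, apply the interior parabolic Schauder estimate on the cylinder $(t-1/2,t+1/2)\times B_1(\omega')$ to bound $|\nabla^2_{\omega'}v(t,\omega')|$ by $C\sup_{(t-1,t+1)\times B_2(\omega')} |v|$ (the constant depends only on the ellipticity and the smoothness of $A$, both uniform). Then plug in \cref{eq:Gammatbd}, noting that on this parabolic cylinder $t-1\le s\le t+1$ and $|\omega-\omega''|\ge |\omega-\omega'|-2$, so $s^{-d}e^{-C^{-1}s^{-1}|\omega-\omega''|^2}\le C t^{-d}e^{-C^{-1}t^{-1}|\omega-\omega'|^2}$ for $t\ge 1$ (absorbing the bounded shift into the constant). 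This gives \cref{eq:GammaHessBd} directly.

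The main obstacle — really the only subtlety — is making sure the derivatives land on the right factor and at a time bounded away from zero, so that no blow-up as the time gap shrinks is incurred; this is precisely why the statement is restricted to $t\ge 1$ and why inserting the intermediate time in \cref{eq:chapmankolmogorov} (or choosing the Schauder cylinder of unit size) is the right move. A secondary bookkeeping point is the conversion between the nondivergence-form fundamental solution $\Gamma_t$ and its adjoint: as already noted in the proof of \cref{lem:fundsolnbound}, the two are related by swapping arguments, so the Schauder/Friedman estimates apply to $\omega'\mapsto\Gamma_t(\omega;\omega')$ upon recognizing it as a solution of the divergence-form adjoint equation with smooth bounded elliptic coefficients.
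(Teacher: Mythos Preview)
Your first approach---the Chapman--Kolmogorov identity \cref{eq:chapmankolmogorov}, differentiating under the integral so the Hessian lands on $\Gamma_{1/2}$, invoking the derivative bounds of \cite{Fri64} at the fixed time $1/2$, and then convolving the two Gaussian-type kernels using \cref{lem:fundsolnbound}---is exactly the paper's proof. Your alternative interior-Schauder route is also valid and is a genuinely different (and arguably more self-contained) argument, but the paper does not take it.
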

\begin{proof}
	By the Chapman--Kolmogorov equation we have
	\begin{equation*}
		\Gamma_t(\omega;\omega')=\int \Gamma_{t-1/2}(\omega;\omega'')\Gamma_{1/2}(\omega'';\omega')\,\dif\omega''.
	\end{equation*}
	Thus we have
	\begin{equation}\label{eq:splitintegral}
		|\nabla_{\omega'}^2\Gamma_t(\omega;\omega')|\le\int |\Gamma_{t-1/2}(\omega;\omega'')|\cdot|\nabla_{\omega'}^2\Gamma_{1/2}(\omega'';\omega')|\,\dif\omega''.
	\end{equation}
	By \cite[Theorem~9.6.7 on p.~261]{Fri64} (which again concerns the fundamental solution for the adjoint problem \cref{eq:nondivergenceformPDE}, but that corresponds to our fundamental solution by swapping the arguments), using the assumed smoothness of $A$, we have a constant $C<\infty$ so that
	\[
		|\nabla_{\omega'}^2\Gamma_{1/2}(\omega'';\omega')|\le C\exp\{-C^{-1}|\omega''-\omega|^2\}.
	\]
	Using this bound along with \cref{lem:fundsolnbound} in \cref{eq:splitintegral} we obtain \cref{eq:GammaHessBd}.
\end{proof}

\section{The stationary solution\label{sec:stationary}}
In this section we show the existence of a spacetime-stationary solution for the SPDE \cref{eq:utdef}, and some properties of the spacetime-stationary solution. The strategy is to consider the solutions to \cref{eq:utdef} started at large negative times, and show that the resulting sequence of fields is a Cauchy sequence in $L^2$.

Let $u^{[M]}$ solve \cref{eq:utdef} but with constant initial condition
$1$ at time $-M$, i.e.,
\begin{equation}\label{eq:uM}
	\begin{aligned}
		 & \dif u^{[M]}( t)  =\frac{1}{2}\tr[(\nu I_{d}+R(0))\nabla^{2}u^{[M]}(t)]\dif t-\nabla\cdot[u^{[M]}(t)V(\dif t)], \quad\quad t>-M \\
		 & u^{[M]}(-M)\equiv1.
	\end{aligned}
\end{equation}
The main result of this section is the following
proposition.
\begin{prop}
	\label{prop:uMCauchy}For any $t\in\mathbf{R}$, $x\in\mathbf{R}^{d}$,
	the sequence $(u_{t}^{[M]}(x))_{M>-t}$ is a Cauchy sequence in
	$L^{2}(\Omega)$. In particular, for any $\eps>0$ (or $\eps=0$ if
	$d=1$) there exists a constant $C=C(\eps,\nu,R)<\infty$ so that for any $M_1,M_2>-t$,
	\begin{equation}
		\mathbf{E}\left|u^{[M_{1}]}(t,x)-u^{[M_{2}]}(t,x)\right|^{2}\le C[(t+M_{1})^{-(d-\eps)/2}+(t+M_{2})^{-(d-\eps)/2}].\label{eq:Cauchyquantitative}
	\end{equation}
\end{prop}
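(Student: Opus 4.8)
The plan is to reduce the problem to an estimate on the second-moment PDE. Write $v^{[M_1,M_2]} = u^{[M_1]} - u^{[M_2]}$; this solves the same linear SPDE \eqref{eq:utdef} on $t > -M_2$ (assuming $M_1 \ge M_2$) but with initial data $u^{[M_1]}(-M_2,\cdot) - 1$ at time $-M_2$. Hence $\mathbf{E}|v^{[M_1,M_2]}(t,x)|^2$ is, by the discussion in \cref{subsec:second-moment-PDE}, obtained by evolving the second-moment operator $\mathcal{L}^*$ (equivalently the PDE \eqref{eq:S-PDE} after the $w,z$ change of variables) from time $-M_2$ to time $t$, applied to the initial two-point function $\mathbf{E}\big[(u^{[M_1]}(-M_2,x_1)-1)(u^{[M_1]}(-M_2,x_2)-1)\big]$. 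So the first step is to set up this representation carefully, using $\mathbf{E}u^{[M]} \equiv 1$ (so the diagonal drift terms integrate against the test functions correctly) and stationarity in space of each $u^{[M]}$.

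Next I would estimate the initial two-point function. Its diagonal value $\mathbf{E}|u^{[M_1]}(-M_2,0)-1|^2$ is the variance of $u^{[M_1]}$ after running for time $M_1 - M_2$; bounding this, and more generally controlling $\mathbf{E}[(u^{[M_1]}(-M_2,x_1)-1)(u^{[M_1]}(-M_2,x_2)-1)]$, should itself come from iterating the same second-moment PDE once, together with the fundamental-solution bounds of \cref{lem:fundsolnbound} and \cref{lem:fundsolnderivbound}. The key structural point — the reason the result holds in all $d \ge 1$ rather than needing $d \ge 3$ — is that the relevant quantity involves $\nabla_z$ acting on the fundamental solution (the $z$-variable being the relative coordinate $X-Y$), reflecting the extra gradient in \eqref{eq:mildsolution}; this produces an extra factor that is integrable in time. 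Concretely, I expect to decompose $\mathcal{L}^* = \mathcal{L}^*_0 + (\mathcal{L}^* - \mathcal{L}^*_0)$ where $\mathcal{L}^*_0$ has the constant coefficient $A(z) = A(\infty)$, note that $A(z) - A(\infty)$ is compactly supported in $z$ (since $R$ is), and treat the difference as a perturbation: each interaction with the perturbation is a local-in-$z$ event, and the Gaussian bound \eqref{eq:Gammatbd} on $\Gamma_t$ controls the probability that the relative coordinate is near $0$ at a given time. Summing the resulting Duhamel series, with the power $t^{-(d-\eps)/2}$ coming from the time-integral $\int^t |z(s)\approx 0| \,\mathrm{d}s$ against the $z$-marginal density decaying like $s^{-d/2}$ (the $\eps$ and the $\log$ in $d=1$ absorbing borderline integrability), gives \eqref{eq:Cauchyquantitative}.

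Finally, the Cauchy property follows immediately from \eqref{eq:Cauchyquantitative}: as $M_1, M_2 \to \infty$ with $M_1 \ge M_2$, the right side tends to $0$ uniformly, so $(u^{[M]}_t(x))_M$ is $L^2(\Omega)$-Cauchy; the spatial-stationarity of the bound (independence of $x$) is automatic since the coefficients of \eqref{eq:utdef} are translation-invariant in space.

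I expect the main obstacle to be the perturbative summation in the second step: making rigorous that the $\mathcal{L}^*$-evolution of the difference can be expanded in a convergent Duhamel series whose terms are genuinely controlled by the number of visits of the relative coordinate to the support of $R$, and extracting the sharp exponent. This requires careful bookkeeping of which gradients land where — the divergence-form structure $-\nabla\cdot[uV(\mathrm{d}t)]$ is what produces the integrable $\nabla_z\Gamma$ factors, and one must verify that no uncontrolled gradient falls on the slowly-decaying part. Handling the borderline case $d=1$ (hence the $\log t$) and tracking the $\eps$-loss in $d \ge 2$ from converting $|z|^{-d}$-type tails into honest $L^p$ bounds via \cref{prop:bexists} is where the estimates are most delicate.
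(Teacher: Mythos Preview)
Your setup is correct: reducing to the second-moment PDE, using linearity to get $S_t^{[M_1,M_2]} = S_t^{[M_2]} - S_t^{[M_1]}$, and invoking spatial stationarity all match the paper. However, your core estimate takes a substantially harder route than the paper's, and misses the main simplification.

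You propose a perturbative Duhamel expansion of $\mathcal{L}^*$ around its constant-coefficient part, summing a series indexed by ``visits to $\supp R$'' and tracking gradients. The paper instead uses \cref{prop:bexists} as the central engine, not merely for tail bookkeeping: the function $\chi$ is precisely the invariant density for the reduced $z$-equation $\partial_t S = \tfrac12\tr[\nabla^2(A_{22}S)]$, so $\int\overline{\Gamma}_{t}(z;z')\chi(z')\,\dif z' = \chi(z)$ identically. Writing $1 = \chi - (\chi-1)$ in the initial data gives the one-line identity
\[
S_t^{[M]}(z) = \chi(z) - \int\overline{\Gamma}_{t+M}(z;z')[\chi(z')-1]\,\dif z',
\]
and a single H\"older inequality, pairing $\|\chi-1\|_{L^p}$ from \eqref{eq:babout1-inLp} against $\|\overline{\Gamma}_{t+M}(z;\cdot)\|_{L^q}\le C(t+M)^{-d/(2p)}$ from \eqref{eq:gammabarbound}, yields \eqref{eq:Cauchyquantitative} immediately. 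No gradient estimates, no series summation, no convergence issue---the very obstacle you flag simply does not arise.

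Two specific corrections. First, there is no $\log t$ in this proposition for $d=1$; on the contrary, $d=1$ is the \emph{good} case where $\eps=0$ is permitted, because $\chi-1\in L^1$ explicitly (\cref{rem:easychi}). The logarithm you are anticipating appears only later, in \cref{prop:turnoffnoise}. Second, the $\eps$-loss for $d\ge 2$ comes not from converting $|z|^{-d}$ tails but simply from choosing $p=d/(d-\eps)>1$ in \eqref{eq:babout1-inLp}, since $\chi-1$ is only guaranteed to lie in $L^p$ for $p>1$.
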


\begin{proof}
	Define
	\[
		S_{t}^{[M]}(z)=\mathbf{E}u^{[M]}(t,0)u^{[M]}(t,z),
	\]
	which is equal to $\mathbf{E}u^{[M]}(t,w-z/2)u^{[M]}(t,w+z/2)$ for
	all $w\in\mathbf{R}^{d}$, due to the fact that the noise is spatially translation-invariant and that the initial data is constant.
	Thus $S_{t}^{[M]}$ satisfies the PDE
	\begin{align}
		\partial_{t}S_{t}^{[M]}(z) & =\overline{\mathcal{L}}^{*}S_{t}^{[M]}(z)=\frac{1}{2}\tr[\nabla^{2}(A_{22}S_{t}^{[M]})(z)],\label{eq:SMPDE} \\
		S_{-M}^{[M]}(z)            & =1,\label{eq:SMic}
	\end{align}
	where we have defined $\overline{\mathcal{L}}f(z)=\frac{1}{2}\tr[A_{22}(z)\nabla^{2}f(z)].$
	The problem \cref{eq:SMPDE}--\cref{eq:SMic} is obtained from \cref{eq:S-PDE}--\cref{eq:Sic}
	by using the space-translation-invariance and the fact that the initial data is a constant. The PDE \cref{eq:SMPDE}
	has fundamental solution $\overline{\Gamma}$ given by
	\[
		\overline{\Gamma}_{t}(z;z')=\int\Gamma(y,z;y',z')\,\dif y,
	\]
	which in fact is independent of $y'$. Integrating \cref{eq:Gammatbd}
	over $y$, we have a constant $C=C(\nu,R)<\infty$ so that
	\begin{equation}
		\overline{\Gamma}_{t}(z;z')\le Ct^{-d/2}\exp\left\{ -C^{-1}t^{-1}|z-z'|^{2}\right\} .\label{eq:gammabarbound}
	\end{equation}
	Now we recall the function $\chi$ from \cref{prop:bexists}. We note
	that
	\begin{equation}
		\int\overline{\Gamma}_{t}(z;z')\chi(z')\,\dif z'=\chi(z)\label{eq:chipreserved}
	\end{equation}
	by \cref{eq:bsolves-1}. Thus we have
	\begin{align}
		S_{t}^{[M]}(z)=\int\overline{\Gamma}_{t+M}(z;z')\,\dif z' & =\int\overline{\Gamma}_{t+M}(z;z')\left(\chi(z')-[\chi(z')-1]\right)\,\dif z'\nonumber \\
		                                                          & =\chi(z)-\int\overline{\Gamma}_{t+M}(z;z')[\chi(z')-1]\,\dif z'.\label{eq:SMtclosetob}
	\end{align}
	By Hölder's inquality, for $1/p+1/q=1$ we have
	\begin{equation}
		\left|\int\overline{\Gamma}_{t+M}(z;z')[\chi(z')-1]\,\dif z'\right|\le\|\overline{\Gamma}_{t+M}(z;\cdot)\|_{L^{q}(\mathbf{R}^{d})}\|\chi-1\|_{L^{p}(\mathbf{R}^{d})}.
		\label{eq:applyholder}
	\end{equation}
	By \cref{eq:gammabarbound}, we have
	\begin{equation}
		\begin{aligned}\|\overline{\Gamma}_{t}(z;\cdot)\|_{L^{q}(\mathbf{R}^{d})} & \le\left(\int Ct^{-dq/2}\exp\left\{ -qC^{-1}t^{-1}|z-z'|^{2}\right\} \,\dif z'\right)^{1/q}                  \\
                                                                          & =\left(\int Ct^{-d(q-1)/2}\exp\left\{ -qC^{-1}|z'|^{2}\right\} \,\dif z'\right)^{1/q}\le Ct^{-\frac{d}{2p}}.
		\end{aligned}
		\label{eq:GammabarLpbound}
	\end{equation}
	Thus, using \cref{eq:babout1-inLp} and \cref{eq:GammabarLpbound} in
	\cref{eq:applyholder} and then substituting into \cref{eq:SMtclosetob},
	we have for any $\eps>0$ (choosing $p=d/(d-\eps)$), there exists a constant $C$ so that
	\begin{equation}
		|S_{t}^{[M]}(z)-\chi(z)|\le C(t+M)^{-(d-\eps)/2}.\label{eq:StMconverges}
	\end{equation}
	When $d=1$, by \cref{rem:easychi} we can take $p=1$, $q=\infty$,
	and thus $\eps=0$.

	Now define, for $M_{1}<M_{2}$, %
	\[
		S_{t}^{[M_{1},M_{2}]}(z)=\mathbf{E}(u^{[M_{1}]}(t,0)-u^{[M_{2}]}(t,0))(u^{[M_{1}]}(t,z)-u^{[M_{2}]}(t,z)), \quad\quad t>-M_1,  z\in\R^d.
	\]
	Then $S_{t}^{[M_{1},M_{2}]}$ again satisfies the PDE \cref{eq:SMPDE},
	since $u^{[M_{1}]}-u^{[M_{2}]}$ also satisfies \cref{eq:utdef} by
	linearity. On the other hand, we have the corresponding initial condition
	\begin{align*}
		S_{-M_{1}}^{[M_{1},M_{2}]}(z) & =\mathbf{E}[(u^{[M_{1}]}(-M_1,0)-u^{[M_{2}]}(-M_1,0))(u^{[M_{1}]}(-M_1,z)-u^{[M_{2}]}(-M_1,z))] \\
		                              & =\mathbf{E}[(1-u^{[M_{2}]}(-M_1,0))(1-u^{[M_{2}]}(-M_1,z))]                                     \\
		                              & =\mathbf{E}[u^{[M_{2}]}(-M_1,0)u^{[M_{2}]}(-M_1,z)]-1                                           \\
		                              & =S_{-M_{1}}^{[M_{2}]}(z)-1.
	\end{align*}
	Here we used the fact that $\mathbf{E}u^{[M_2]}\equiv1$. From this and the linearity of \cref{eq:SMPDE} we further conclude that
	\begin{equation}
		S_{t}^{[M_{1},M_{2}]}(z)=S_{t}^{[M_{2}]}(z)-S_{t}^{[M_{1}]}(z), \quad\quad t>-M_1,z\in\R^d.
		\label{eq:StMisdifference}
	\end{equation}
	Combining \cref{eq:StMconverges}, \cref{eq:StMisdifference}, and the
	triangle inequality, we have
	\[
		\mathbf{E}\left|u^{[M_{1}]}(t,x)-u^{[M_{2}]}(t,x)\right|^{2}=S_{t}^{[M_{1},M_{2}]}(0)\le C[(t+M_{1})^{-(d-\eps)/2}+(t+M_{2})^{-(d-\eps)/2}],
	\]
	which is \cref{eq:Cauchyquantitative}.
\end{proof}
\begin{cor}
	\label{cor:Uexists}There is a positive random function $U:\mathbf{R}\times\mathbf{R}^{d}\to\mathbf{R}$
	so that, for every $t\in\mathbf{R}$ and $x\in\mathbf{R}^{d}$, we
	have for any $\eps>0$ (or $\eps=0$ if $d=1$) that
	\[
		\lim_{M\to\infty}(t+M)^{(d-\eps)/2}\mathbf{E}|u^{[M]}(t,x)-U(t,x)|^{2}=0.
	\]
	Moreover, $U$ is stationary under time and space translations, $\mathbf{E}  U\equiv 1$, and
	$U$ solves the SPDE \cref{eq:utdef}. Finally, for $x_1,x_2\in\R$, we have
	\begin{equation}\mathbf{E} [U(t,x_1) U(t,x_2)]=\chi(x_1-x_2),\label{eq:UCov}
	\end{equation}
with $\chi$ as in \cref{prop:bexists}.
\end{cor}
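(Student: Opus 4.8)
The plan is to define $U(t,x)$ as the $L^2(\Omega)$-limit provided by \cref{prop:uMCauchy} and then transfer each asserted property from the approximations $u^{[M]}$. For fixed $(t,x)$ the family $(u^{[M]}(t,x))_{M>-t}$ is Cauchy in $L^2(\Omega)$, hence converges to some $U(t,x)\in L^2(\Omega)$, and letting $M_2\to\infty$ in \cref{eq:Cauchyquantitative} yields $(t+M)^{(d-\eps)/2}\mathbf{E}|u^{[M]}(t,x)-U(t,x)|^2\to0$. To obtain a genuine (jointly measurable, in fact continuous) random field rather than an object defined only a.e.\ in $(t,x)$, I would record uniform-in-$M$ moment bounds on the increments of $u^{[M]}$ in the regime $t+M\ge1$: spatial increments are handled via $\mathbf{E}|u^{[M]}(t,x)-u^{[M]}(t,x')|^2=2[S_t^{[M]}(0)-S_t^{[M]}(x-x')]$ together with interior parabolic regularity for \cref{eq:SMPDE} and the Gaussian bounds of \cref{lem:fundsolnbound,lem:fundsolnderivbound}, while temporal increments follow by bounding the drift and stochastic terms of \cref{eq:utdef} in $L^2$. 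Kolmogorov's criterion then supplies a continuous version, and along a subsequence $M_k\to\infty$ the convergence $u^{[M_k]}\to U$ is almost sure and locally uniform.

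The first-moment, covariance, stationarity, and SPDE properties are then routine transfers through the $L^2$-limit. Taking $\mathbf{E}$ of \cref{eq:utdef} kills the Itô term, so $\mathbf{E}u^{[M]}(t,\cdot)$ solves $\partial_tv=\tfrac12\tr[(\nu I_d+R(0))\nabla^2v]$ with $v(-M,\cdot)\equiv1$; hence $\mathbf{E}u^{[M]}\equiv1$, and $L^2$-convergence upgrades to $L^1$-convergence, giving $\mathbf{E}U\equiv1$. For the covariance, spatial translation-invariance of the law of $V$ and the constant initial condition give $\mathbf{E}[u^{[M]}(t,x_1)u^{[M]}(t,x_2)]=S_t^{[M]}(x_1-x_2)$, which converges to $\chi(x_1-x_2)$ by \cref{eq:StMconverges}; since $L^2$-convergence of each factor forces $L^1$-convergence of the product, \cref{eq:UCov} follows (note that $\chi\equiv1$ then gives $\mathbf{E}(U-1)^2=\chi(0)-1=0$, i.e.\ $U\equiv1$, in the incompressible case). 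Space-stationarity of $U$ holds because $(u^{[M]}(t,x_i+h))_i$ and $(u^{[M]}(t,x_i))_i$ have the same law for every finite collection $x_1,\dots,x_n$, and finite-dimensional laws pass to the limit; time-stationarity holds because $V(\cdot+s,\cdot)\stackrel{d}{=}V$, so $(u^{[M]}(t+s,x))_{t>-M-s,\,x}\stackrel{d}{=}(u^{[M+s]}(t,x))_{t>-(M+s),\,x}$, and sending $M\to\infty$ gives $U(\cdot+s,\cdot)\stackrel{d}{=}U$. Finally, that $U$ satisfies \cref{eq:utdef} — understood as the identity \cref{eq:generalizedsolution} between any two finite times $s_0<t$, with no initial condition imposed — is obtained by writing \cref{eq:generalizedsolution} for $u^{[M]}$ against a Schwartz function $h$ and letting $M\to\infty$: the drift integrals converge by dominated convergence using the uniform second-moment bound from \cref{eq:StMconverges}, and the stochastic integral converges in $L^2$ by the Itô isometry and the $L^2$-convergence $u^{[M]}(s)\to U(s)$.

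The one genuinely delicate point is strict positivity, since the $L^2$-limit of the pointwise-positive $u^{[M]}$ is a priori only nonnegative. Here I would use the flow-based solution theory: by uniqueness for \cref{eq:utdef}, for any $s<t$ the field $U(t,\cdot)$ is the density of $\mathbf{E}[(\varphi_{s,t})_*(U(s,\cdot)\,\dif x)\mid\mathcal{F}]$, so $U(t,x)=\int U(s,z)K_{s,t}(z,x)\,\dif z$ where $K_{s,t}(z,x)$ is the quenched transition density of the diffusion \cref{eq:dX}; since $\nu>0$, Aronson-type Gaussian lower bounds give $K_{s,t}(z,x)>0$ for all $z,x$ almost surely. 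Because $U(s,\cdot)\ge0$, this shows that for every $(t,x)$ the event $\{U(t,x)=0\}$ equals $\{U(s,\cdot)=0\text{ Lebesgue-a.e.}\}$, which, by a further application of the representation, does not depend on $s$; call this common event $N$. Thus $\{U(t,x)=0\}=N$ for all $(t,x)$, and $\mathbf{P}(N)=1$ would contradict $\mathbf{E}U\equiv1$, so $\mathbf{P}(N)=0$ and (by the continuity from the first step) $U>0$ everywhere almost surely. I expect the measurability/continuity bookkeeping in the first step, together with verifying the flow representation and the strict positivity of $K_{s,t}$ for this white-in-time environment, to be the most technical parts; everything else is a direct transfer of properties across the $L^2$-limit.
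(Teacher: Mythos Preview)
Your proposal follows the same approach as the paper: construct $U$ as the $L^2(\Omega)$-limit of $u^{[M]}$ using \cref{prop:uMCauchy} and then transfer each property through the limit. The paper's own proof is a four-sentence sketch that uses Fubini (into a weighted $L^2(\R\times\R^d\times\Omega)$ space) rather than your Kolmogorov-continuity route for joint measurability, and simply declares positivity, stationarity, and the passage of the SPDE to the limit to be ``clear''; your strict-positivity argument via the flow representation and the quenched transition density is more careful than anything the paper actually writes down.

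One small slip worth flagging: sending $M_2\to\infty$ in \cref{eq:Cauchyquantitative} gives only
\[
(t+M)^{(d-\eps)/2}\,\mathbf{E}\bigl|u^{[M]}(t,x)-U(t,x)\bigr|^2\le C,
\]
not convergence to $0$. For $d\ge2$ you recover the stated $o(1)$ by applying \cref{eq:Cauchyquantitative} (or \cref{eq:StMconverges}) with some $\eps'<\eps$; the $d=1$, $\eps=0$ case then needs its own word.
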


\begin{proof}
	We can construct $U$ as the limit of the $u^{[M]}$ in an appropriate
	spatially- and temporally-weighted $L^{2}$ space using \cref{prop:uMCauchy}
	and Fubini's theorem. The limit preserves the expectation, so $\mathbf{E}U\equiv 1$. Positivity, spatial and temporal stationarity, and the fact that solving the SPDE \cref{eq:utdef} (i.e. solving the integral equation \cref{eq:generalizedsolution}) passes to the limit, are all clear. Finally, \cref{eq:UCov} follows directly from the convergence of $u^{[M]}(t,x)\to U(t,x)$ in  $L^2(\Omega)$ and \eqref{StMconverges}.
\end{proof}

\begin{rem}\label{rem:environment}
The spacetime stationary random field $U$  solves \eqref{utdef}, which is related to the Fokker-Planck equation for the process of ``environment seen from the particle.'' If we use $U(0,0)$ as the Radon-Nikodym derivative to tilt the probability measure $\mathbf{P}$, then the new measure is an invariant measure for the ``environment seen from the particle.'' Actually, modulo notation, \cref{eq:utdef} starting from $u(0)\equiv1$ is precisely the equation for the Radon-Nikodym derivative of the environmental process (see e.g.\ \cite[equation~(3.2)]{komorowski2001homogenization} for the expression of generator of the environmental process), from which one can easily write down the evolution of its Radon-Nikodym derivative. Thus, in a sense $U$ describes the steady state of the environmental process and the function $\chi$ in \eqref{UCov} is related to the mixing property of the steady state. For a model of random walk in balanced random environment, \cite{DeGu19} proved a similar result as \cref{thm:maintheorem}. The $U(t,x)$ constructed above corresponds to the $\rho_\omega(x,t)$ defined in \cite[Page 3]{DeGu19}, and the SPDE (\ref{eq:utdef}) corresponds to \cite[Equation (3)]{DeGu19}. For a Markovian velocity field with a large spectral gap, the invariant measure was constructed in \cite{komorowski2003invariant}, analogous to our construction of the spacetime stationary solution to \eqref{utdef}, although the velocity field here is white in time which corresponds to an infinite spectral gap.
\end{rem}

\begin{cor}
	\label{cor:theconvergenceweneed}For any sequence $((t_{k},M_{k}))_{k\ge0}$
	so that $t_{k}+M_{k}\to\infty$ as $k\to\infty$, we have for any
	$\eps>0$ (or $\eps=0$ if $d=1$) that
	\begin{equation}
		\lim_{k\to\infty}(t_{k}+M_{k})^{(d-\eps)/2} \mathbf{E}|u^{[M_{k}]}(t_k,x)-U(t_k,x)|^{2}=0.\label{eq:theconvergenceweneed}
	\end{equation}
\end{cor}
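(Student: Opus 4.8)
The plan is to reduce \eqref{eq:theconvergenceweneed} to \cref{cor:Uexists} by showing that the mean-square error $\mathbf{E}|u^{[M]}(t,x)-U(t,x)|^{2}$ depends only on $t+M$, and not separately on $t$ or $x$. Write it as $\psi(t+M)$ for some $\psi\colon(0,\infty)\to[0,\infty)$. Then \cref{cor:Uexists}, applied with $t=0$ and a fixed $x$, is precisely the statement that $\tau^{(d-\eps)/2}\psi(\tau)\to0$ as $\tau\to\infty$; hence for any sequence with $\tau_{k}:=t_{k}+M_{k}\to\infty$ we get $\tau_{k}^{(d-\eps)/2}\psi(\tau_{k})\to0$, which is exactly \eqref{eq:theconvergenceweneed}.

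To establish that the error depends only on $t+M$, I would reuse the second-moment identities already in hand. By spatial stationarity of the (constant-initial-data) solutions $u^{[M]}$ and of $U$, the quantity does not depend on $x$, so we may take $x=0$ and use $S_{t}^{[M,M']}(0)=\mathbf{E}|u^{[M]}(t,0)-u^{[M']}(t,0)|^{2}$. Since $u^{[M']}(t,0)\to U(t,0)$ in $L^{2}(\Omega)$ by \cref{cor:Uexists}, and using \eqref{eq:StMisdifference} together with \eqref{eq:StMconverges} (which gives $S_{t}^{[M']}(0)\to\chi(0)$ as $M'\to\infty$),
\begin{equation*}
\mathbf{E}\bigl|u^{[M]}(t,0)-U(t,0)\bigr|^{2}=\lim_{M'\to\infty}S_{t}^{[M,M']}(0)=\lim_{M'\to\infty}\bigl(S_{t}^{[M']}(0)-S_{t}^{[M]}(0)\bigr)=\chi(0)-S_{t}^{[M]}(0).
\end{equation*}
Finally, by \eqref{eq:SMtclosetob},
\begin{equation*}
\mathbf{E}\bigl|u^{[M]}(t,x)-U(t,x)\bigr|^{2}=\chi(0)-S_{t}^{[M]}(0)=\int\overline{\Gamma}_{t+M}(0;z')\,[\chi(z')-1]\,\dif z',
\end{equation*}
which manifestly depends only on $t+M$. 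Morally this is just spacetime stationarity — translating time by $-t$ identifies the joint law of $\bigl(u^{[M]}(t,x),U(t,x)\bigr)$ with that of $\bigl(u^{[M+t]}(0,x),U(0,x)\bigr)$ — but the computation above makes it rigorous using only results already proved.

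I do not anticipate a real obstacle here: once the error is known to be a function of $t+M$ alone, \eqref{eq:theconvergenceweneed} follows immediately from \cref{cor:Uexists}. The only point requiring a line of care is the identity $\mathbf{E}|u^{[M]}(t,x)-U(t,x)|^{2}=\chi(0)-S_{t}^{[M]}(0)$, which is a routine consequence of the $L^{2}$-convergence in \cref{cor:Uexists}, the decomposition \eqref{eq:StMisdifference}, and the bound \eqref{eq:StMconverges}; no new estimates beyond those already established are needed.
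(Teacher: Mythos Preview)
Your proposal is correct and follows the same idea as the paper, which disposes of the corollary in one line (``This is clear from the translation-invariance and \cref{cor:Uexists}''). You have simply made explicit, via the second-moment identity $\mathbf{E}|u^{[M]}(t,x)-U(t,x)|^{2}=\chi(0)-S_{t}^{[M]}(0)$, the content of the phrase ``translation-invariance,'' namely that the error depends only on $t+M$; the paper takes this as immediate from the spacetime stationarity you yourself note at the end.
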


\begin{proof}
	This is clear from the translation-invariance and \cref{cor:Uexists}.
\end{proof}

Finally, we can derive a bound regarding the temporal decorrelation of $U$.
\begin{prop}\label{prop:timecorr}For any $\eps>0$ (or $\eps = 0$ if $d=1$) we have a constant $C<\infty$ such that for any $t,s\in\mathbf{R}$ and $x,y\in\mathbf{R}^d$, we have
\begin{equation}
 |\mathbf{E} [(U(t,x)-1)(U(s,y)-1)]|\le C(1+|t-s|)^{-(d-\eps)/4}.\label{eq:timedecorrelation}
\end{equation}
\end{prop}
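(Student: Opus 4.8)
The plan is to exploit the Markov/martingale structure of the SPDE \eqref{eq:utdef} together with the already-established convergence rate in \cref{cor:Uexists}. Without loss of generality assume $s\le t$ and set $\tau = t-s \ge 0$; by spatial and temporal stationarity we may also take $x,y$ arbitrary. The key observation is that the stationary field $U$ restarted from time $s$ is itself a solution to \eqref{eq:utdef}, so we can compare $U(t,x)$ to the solution of \eqref{eq:utdef} with initial data $U(s,\cdot)$ at time $s$, but ``frozen'' appropriately. More precisely, let $v$ solve \eqref{eq:utdef} on $(s,\infty)$ with $v(s,\cdot)\equiv 1$; by temporal stationarity and \cref{cor:Uexists} (applied with the base time $s$ and $M\to\infty$), we have $\mathbf{E}|v(t,x) - U(t,x)|^2 \le C\tau^{-(d-\eps)/2}$, since $v = u^{[-s]}$ evaluated at time $t$, i.e.\ with elapsed time exactly $\tau$.

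The next step is to decorrelate $v(t,x)$ from $U(s,y)$. Condition on $\mathcal{F}_s$: the increments of $V$ on $(s,t)$ are independent of $\mathcal{F}_s$, hence independent of $U(s,y)$ (which is $\mathcal{F}_s$-measurable, being a limit of $u^{[M]}(s,y)$, each $\mathcal{F}_s$-measurable). Since $v$ is driven only by the increments of $V$ on $(s,\infty)$ with deterministic initial data $1$, the field $v(t,x)$ is independent of $\mathcal{F}_s$, so $\mathbf{E}[(v(t,x)-1)(U(s,y)-1)] = \mathbf{E}[v(t,x)-1]\,\mathbf{E}[U(s,y)-1] = 0$, using $\mathbf{E}v\equiv 1$ and $\mathbf{E}U\equiv 1$. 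Therefore
\[
 \mathbf{E}[(U(t,x)-1)(U(s,y)-1)] = \mathbf{E}[(U(t,x)-v(t,x))(U(s,y)-1)],
\]
and by Cauchy--Schwarz this is bounded by $\|U(t,x)-v(t,x)\|_{L^2(\Omega)}\,\|U(s,y)-1\|_{L^2(\Omega)}$. The first factor is at most $C\tau^{-(d-\eps)/4}$ by the previous paragraph; the second factor is a finite constant, equal to $(\chi(0)-1)^{1/2}$ via \eqref{eq:UCov} (or simply bounded using \cref{prop:bexists}). This gives the bound $C(1+|t-s|)^{-(d-\eps)/4}$ once we also note that for $\tau$ bounded the left side is controlled by $\|U(t,x)-1\|_{L^2}\|U(s,y)-1\|_{L^2} \le C$, so the two regimes combine into the stated estimate with the $(1+|t-s|)$ normalization.

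The main obstacle I anticipate is a measurability/adaptedness subtlety: I need to be sure that $U(s,y)$ is genuinely $\mathcal{F}_s$-measurable and that $v(t,x)$ depends only on $\{V(r)-V(s): r\in[s,t]\}$, so that the conditional independence argument is airtight. The first point follows because each $u^{[M]}(s,y)$ solves \eqref{eq:utdef} started at $-M < s$ and is thus $\mathcal{F}_s$-measurable (it is built from the stochastic flow up to time $s$), and $\mathcal{F}_s$-measurability passes to the $L^2$ limit $U(s,y)$. The second point follows from uniqueness in \cref{prop:utsolvesSPDE}: the solution of \eqref{eq:utdef} on $(s,\infty)$ with deterministic initial data is a measurable functional of the driving noise increments on that interval, hence independent of $\mathcal{F}_s$. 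One should also double-check that the convergence rate in \cref{cor:Uexists} is uniform enough to be applied with base time $s$ and elapsed time $\tau$ — but this is exactly the content of \cref{cor:theconvergenceweneed}, which was stated precisely for sequences with $t_k + M_k\to\infty$, so that covers it.
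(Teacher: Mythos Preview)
Your proof is correct and is essentially identical to the paper's own argument: you replace the later-time value $U(t,x)$ by the solution $v$ started from constant $1$ at the earlier time $s$ (the paper's $u^{[M]}$ with $M=-s$), use independence of $v(t,x)$ from $\mathcal{F}_s$ (and hence from $U(s,y)$) to kill the cross term, and then apply Cauchy--Schwarz together with the $L^2$ convergence rate from \cref{prop:uMCauchy}/\cref{cor:Uexists}. Your discussion of the measurability points is accurate and matches what the paper uses implicitly.
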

\begin{proof}
By stationarity, we can assume without loss of generality that $t=-M$ and $s=0$, for some $M>0$. Note that the fields $u^{[M]}(0)$ and $U(-M)$ are independent. Therefore, we have
\begin{align*}
 |\mathbf{E} [(U(-M,x)-1)(U(0,y)-1)]|&\le |\mathbf{E} [(U(-M,x)-1)(U(0,y)-u^{[M]}(0,y))]|\\&\qquad+|\mathbf{E} [(U(-M,x)-1)(u^{[M]}(0,y)-1)]|\\&\le (\mathbf{E} |U(-M,x)-1|^2)^{1/2}(\mathbf{E}|U(0,y)-u^{[M]}(0,y)|^2)^{1/2}.
\end{align*}
The first expectation on the right side is bounded uniformly in $M$, and the second goes to zero when multiplied by $M^{(d-\eps)/4}$ by \cref{cor:Uexists}. This completes the proof.
\end{proof}

\section{Proof of Theorem~\ref{thm:maintheorem}}\label{sec:deltaic}
In this section we prove \cref{thm:maintheorem}. The strategy is similar to the construction of $U$ in the previous section, involving turning off the noise except on a layer around the final time. First we show that the error incurred by turning off the noise in this way is small. Then we show that, since the solution to the heat equation is smooth, the resulting solution at a point is well-approximated by a solution started at an appropriate constant (the solution to the heat equation at the same point). Due to the linearity of the SPDE, this gives us the multiplicative structure in \cref{eq:mainthm-bound}.

Let $u(t)$ solve \cref{eq:utdef} with $u(0)=\delta_{0}$, a Dirac
delta measure at zero. For any $q\leq t$, let $\mathcal{F}_{q,t}$ be the $\sigma$-algebra generated by $V(s)-V(r)$ for $q\le r\le s\le t$. Given $q>0$, define
\[
	\tilde{u}_{q}(t)=\mathbf{E}[u(t)\mid\mathcal{F}_{q,t}], \quad\quad t\geq q.
\]
Then $t\mapsto\tilde{u}_{q}(t)$ satisfies \cref{eq:utdef} for $t>q$, and  we have the initial condition
\[
	\tilde{u}_{q}(q)=G_{q},
\] where we recall that $G$ solves \cref{eq:eqG}. An important step in the proof of \cref{thm:maintheorem} is the following proposition.
\begin{prop}
	\label{prop:turnoffnoise}
	For any $\beta\in(0,1)$, there exists a constant $C=C(R,\nu,\beta)<\infty$
	so that, for all $t\ge C$,
	\begin{equation}
		\sup_{x\in\mathbf{R}^{d}}\mathbf{E}|u(t,x)-\tilde{u}_{t-t^{\beta}}(t,x)|^{2}\le  Ct^{-(d\wedge (d/2+1))-\beta d/2}\log t.%
		\label{eq:takeqtminuslogt}
	\end{equation}
\end{prop}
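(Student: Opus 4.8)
The plan is to bound the $L^2$ error $\mathbf{E}|u(t,x)-\tilde u_q(t,x)|^2$ with $q = t - t^\beta$ using the second-moment PDE machinery from \cref{subsec:second-moment-PDE}. Write $v(s) = u(s) - \tilde u_q(s)$ for $s \ge q$; by linearity of \cref{eq:utdef}, $v$ solves the same SPDE on $(q,\infty)$ with initial data $v(q) = u(q) - G_q$ at time $q$. The quantity we want is $\mathbf{E} v(t,x)^2$, which we can read off from $\mathbf{E}[v(t,x_1)v(t,x_2)]$ at $x_1=x_2=x$, and the latter solves (in the change of variables $w = (x_1+x_2)/2$, $z = x_1 - x_2$) the deterministic parabolic equation \cref{eq:S-PDE}, but now with the caveat that $v(q)$ is a \emph{random} field, so one first takes $\mathbf{E}[v(q,x_1)v(q,x_2)]$ as the initial condition. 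Concretely, letting $P_t(w,z) = \mathbf{E}[v(t, w+z/2)\,v(t,w-z/2)]$, we have $\partial_t P = \mathcal{L}^* P$ with $P_q(w,z)$ determined by the correlations of $u(q)-G_q$; note $u(q)$ is $\mathcal{F}_{0,q}$-measurable and has annealed density $G_q$, so $\mathbf{E}[u(q,x_1)u(q,x_2)] = S^{\mathrm{free}}_q(x_1,x_2)$, the annealed two-point function started from $\delta_0$, while the cross terms are $G_q(x_1)G_q(x_2)$. Hence
\[
	P_q(w,z) = S^{\mathrm{free}}_q(w+z/2,w-z/2) - G_q(w+z/2)G_q(w-z/2),
\]
and we then propagate this by the fundamental solution $\Gamma$ of \cref{lem:fundsolnbound} for time $t - q = t^\beta$:
\[
	\mathbf{E} v(t,x)^2 = P_t(x,0) = \int \Gamma_{t^\beta}\big((x,0);(w',z')\big)\, P_q(w',z')\,\dif w'\,\dif z'.
\]

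The first substantive step is to estimate $P_q(w',z')$. The point is that $S^{\mathrm{free}}_q$ and $G_q \otimes G_q$ have the \emph{same} marginals (both integrate to $G_q(x_i)$ in the other variable) and the same large-scale Gaussian profile on scale $\sqrt q$ in the center-of-mass variable; they differ only through the dependence induced by the common environment $V$, which is felt on the scale of $\diam \supp R = O(1)$ in the relative variable $z'$. So I expect a bound of the shape $|P_q(w',z')| \le C q^{-d} \exp\{-c|w'|^2/q\}\,\rho(z')$ where $\rho$ captures both a term of size $O(1)$ supported (up to Gaussian tails on scale $\sqrt q$) near $z' = 0$ coming from the correlation, and the ``diagonal-ish'' Gaussian pieces; the integral of $\rho$ over $z'$ should be $O(\log q)$ or a small power of $q$ — this is where the $\log t$ in \cref{eq:takeqtminuslogt} will enter. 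More carefully: $S^{\mathrm{free}}$ itself solves \cref{eq:S-PDE} from $\delta_0\otimes\delta_0$, i.e.\ from a measure concentrated at $(w,z)=(0,0)$, so $S^{\mathrm{free}}_q(w',z') = \Gamma_q((w',z');(0,0))$ in the swapped-argument sense, giving $\le Cq^{-d}\exp\{-c(|w'|^2+|z'|^2)/q\}$ by \cref{lem:fundsolnbound}; the subtracted product is explicitly Gaussian, $G_q(w'+z'/2)G_q(w'-z'/2) = c\,q^{-d}\exp\{-\tfrac1{2q}[(w'+z'/2)\cdot M^{-1}(w'+z'/2) + (w'-z'/2)\cdot M^{-1}(w'-z'/2)]\}$ with $M = \nu I_d + R(0)$. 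Both are $O(q^{-d})$ with Gaussian decay in $w'$ on scale $\sqrt q$; their difference is what we must show is small. The key mechanism is that $S^{\mathrm{free}}_q(w',z') - G_q(w'+z'/2)G_q(w'-z'/2)$, \emph{for fixed moderate $z'$}, decays faster in $q$ than $q^{-d}$ — because the two processes $W(t),Z(t)$ decouple at leading order (the off-diagonal blocks $A_{12},A_{21}$ of $A(z)$ vanish for $|z| > \diam\supp R$, and $A_{22}(z) \to M$, $A_{11}(z)\to M/4$), so $S^{\mathrm{free}}$ factorizes as (density of $W$)$\times$(density of $Z$) up to corrections, and the density of $W$ at $w'$ is $\approx 2^d G_q^{\text{(for $W$)}}(w') \approx$ the product $G_q(w'+z'/2)G_q(w'-z'/2)$ when $|z'| \ll \sqrt q$. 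Quantifying this requires comparing the fundamental solution of the variable-coefficient operator $\mathcal L$ with that of its constant-coefficient limit, which I would do by a Duhamel/perturbation argument: $\Gamma - \Gamma^{\mathrm{const}} = \int_0^q \Gamma^{\mathrm{const}}_{q-s} (\mathcal L - \mathcal L^{\mathrm{const}}) \Gamma_s$, and $\mathcal L - \mathcal L^{\mathrm{const}}$ is supported in $|z| \le M$, which localizes one integration variable and produces the gain. This comparison — getting a \emph{quantitative} rate, of order $q^{-1}$ or so per application, matching the exponent $d \wedge (d/2+1)$ — is the main obstacle, and the $d/2 + 1$ versus $d$ dichotomy presumably reflects whether the gain saturates (in low dimension the $\sqrt q$-scale Gaussian mass in $w'$ already gives enough, in high dimension the relative-variable localization is the binding constraint).

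Granting the bound $|P_q(w',z')| \le Cq^{-(d\wedge(d/2+1))}\,\tilde\rho(z')\,q^{-d/2}\exp\{-c|w'|^2/q\}$ with $\int \tilde\rho \le C\log q$ (schematically; the exact split of powers between the two factors is what the calculation must pin down so that the product is $q^{-(d\wedge(d/2+1))-\beta d/2}\log q$ after integrating against $\Gamma_{t^\beta}$), the final step is routine: plug into $P_t(x,0) = \int \Gamma_{t^\beta}((x,0);(w',z'))P_q(w',z')\,\dif\omega'$, use \cref{eq:Gammatbd} to bound $\Gamma_{t^\beta} \le C t^{-\beta d}\exp\{-c t^{-\beta}(|x-w'|^2 + |z'|^2)\}$, and carry out the Gaussian convolution. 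The $w'$-integral combines a Gaussian of width $\sqrt q \approx \sqrt t$ with one of width $t^{\beta/2}$, producing the factor $t^{-d/2}$ (from the wider, i.e.\ the $\sqrt t$-scale, Gaussian evaluated near $x$ — using $|x| \lesssim \sqrt t$ is not even needed here since we take the sup over all $x$ and the $P_q$ Gaussian in $w'$ is centered at $0$ with the $\Gamma$ Gaussian centered at $x$; their convolution at the relevant scale is $\le C t^{-d/2}$ uniformly in $x$); the $z'$-integral against $\int\tilde\rho \le C\log t$ contributes the logarithm, and one is left with $t^{-(d\wedge(d/2+1))}\cdot t^{-d/2}\cdot t^{-\beta d}\cdot(\text{something})\cdot\log t$ — here I need to be careful that the $z'$-width of $\Gamma_{t^\beta}$ is $t^{\beta/2} \gg 1 = O(\diam\supp\rho$-scale$)$, so $\int \Gamma_{t^\beta}(\cdots;z')\tilde\rho(z')\dif z' \le Ct^{-\beta d/2}\int\tilde\rho \le Ct^{-\beta d/2}\log t$, which supplies exactly the $t^{-\beta d/2}\log t$. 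Reconciling the bookkeeping so the exponents assemble to $-(d\wedge(d/2+1)) - \beta d/2$ with a single $\log t$ is the one place to proceed with care, but it is mechanical once the $P_q$ estimate is in hand. Throughout, I would cite \cref{lem:fundsolnbound} for all fundamental-solution bounds and \cref{prop:utsolvesSPDE} (applied on the torus of $\sigma$-algebras $\mathcal F_{q,t}$) to justify that $\tilde u_q$ and hence $v$ solve \cref{eq:utdef} and that the second-moment PDE \cref{eq:S-PDE} governs $P$.
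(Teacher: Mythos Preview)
Your overall framework is right and matches the paper's: set $P_t(w,z)=\mathbf{E}[v(t,w+z/2)v(t,w-z/2)]$ with $v=u-\tilde u_q$, observe that $P$ solves \cref{eq:S-PDE} for $t>q$ with initial data $P_q=S_q-H_q$ (where $H_q(w,z)=G_q(w+z/2)G_q(w-z/2)$ is the constant-coefficient heat kernel), and use the Duhamel identity
\[
P_q(\omega')=\int_0^q\int H_{q-s}(\omega'-\omega'')\,\tr\bigl[\nabla^2[\tilde A S_s](\omega'')\bigr]\,\dif\omega''\,\dif s,
\]
which you wrote as $\Gamma-\Gamma^{\mathrm{const}}=\int_0^q \Gamma^{\mathrm{const}}_{q-s}(\mathcal L-\mathcal L^{\mathrm{const}})\Gamma_s$.

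The gap is in your plan to first produce a pointwise bound on $P_q$ of the form $|P_q(w',z')|\le Cq^{-(d\wedge(d/2+1))}\,\tilde\rho(z')\,q^{-d/2}\e^{-c|w'|^2/q}$ with $\int\tilde\rho\le C\log q$, and only then convolve with $\Gamma_{t^\beta}$. Two things go wrong. First, the $z'$-profile of $P_q$ is not localized on an $O(1)$ scale: the Duhamel integrand $\tilde A S_s$ is compactly supported in $z''$, but convolving with $H_{q-s}$ spreads it to scale $\sqrt{q-s}$, so $P_q$ is a superposition over $s\in(0,q)$ of pieces with $z'$-widths ranging from $O(1)$ up to $\sqrt q$. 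Your proposal vacillates between ``Gaussian tails on scale $\sqrt q$'' and ``$\diam\supp\rho=O(1)$'' for exactly this reason; no single $\tilde\rho$ with small $L^1$ norm captures it. Second, if you integrate by parts in the Duhamel formula to put the Hessian on $H_{q-s}$, you pick up $(q-s)^{-1}$, which is not integrable near $s=q$; and if you put it on $S_s$ instead, you face $s^{-d-1}$ near $s=0$. A uniform pointwise bound on $P_q$ of the strength you claim therefore needs an $s$-dependent case split that your proposal does not supply, and even then the correct bound would still carry the $s$-dependent $z'$-scale, so the final convolution with $\Gamma_{t^\beta}$ would not reduce to ``$t^{-\beta d/2}\int\tilde\rho$'' as you wrote. (A quick consistency check: your conjectured $P_q$ bound, fed through the convolution as you describe, yields an extra factor $t^{-d/2}$ beyond the stated \cref{eq:takeqtminuslogt}, which signals that the bound is too strong to be true.)

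The paper avoids all of this by \emph{not} bounding $P_q$ first. Instead it applies Fubini to
\[
\overline S_{q,t}(\omega)=\int\Gamma_{t-q}(\omega;\omega')P_q(\omega')\,\dif\omega'
=\int_0^q\int K_{q-s,\,t-q}(\omega;\omega')\,\tr\bigl[\nabla^2[\tilde A S_s](\omega')\bigr]\,\dif\omega'\,\dif s,
\]
where $K_{r_1,r_2}(\omega;\omega')=\int\Gamma_{r_2}(\omega;\omega'')H_{r_1}(\omega''-\omega')\,\dif\omega''$. After integrating by parts onto $K$, the key technical input is \cref{lem:Kderivs-1}, which gives $|\nabla^2_{\omega'}K_{r_1,r_2}|\le C(r_1^{-1}\wedge 1)(r_1+r_2)^{-d}\e^{-c|\omega-\omega'|^2/(r_1+r_2)}$: the Hessian can be placed on either $H_{r_1}$ or (via a second integration by parts) on $\Gamma_{r_2}$, and taking the better of the two yields the $(r_1^{-1}\wedge 1)$ that cures the $s\to q$ singularity. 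Because $\tilde A(z')$ is compactly supported, the $z'$-integral against the Gaussian of width $\sqrt{t-s}\ge t^{\beta/2}$ then produces the factor $(t-s)^{-d/2}\le t^{-\beta d/2}$ directly, and the remaining $s$-integral $\int_1^q[(q-s)^{-1}\wedge 1]s^{-d/2}\,\dif s$ gives the $q^{-(d/2)\wedge 1}\log q$ that assembles into the exponent $-(d\wedge(d/2+1))$. The moral: do the propagation by $\Gamma_{t-q}$ \emph{inside} the Duhamel time integral, not after it.
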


\begin{proof}

	\emph{Step 1: taking second moments. }Define
	\[
		\overline{A}=\begin{pmatrix}\frac{1}{4}\left[\nu I_{d}+R(0)\right] \\
			 & \nu I_{d}+R(0)
		\end{pmatrix},\qquad\tilde{A}(z)=\begin{pmatrix}\frac{1}{8}\left[R(z)+R(z)^{\mathrm{T}}]\right] & \frac{1}{4}\left[R(z)-R(z)^{\mathrm{T}}\right]  \\
               \frac{1}{4}\left[R(z)^{\mathrm{T}}-R(z)\right]  & -\frac{1}{2}\left[R(z)+R(z)^{\mathrm{T}}\right]
		\end{pmatrix},
	\]
	so we can decompose \cref{eq:Adef} as
	\begin{equation}
		A(z)=\overline{A}+\tilde{A}(z).\label{eq:AAbarAtilde}
	\end{equation}
	Let $H_{t}$ be the  solution to the PDE 
	\begin{align*}
			\partial_{t}H_{t}(\omega) & =\tr[\overline{A}\nabla^{2}H_{t}(\omega)]; \\
			H_{0}                     & =\delta_{0}.
		\end{align*}
	This means that $H_{t}(w,z)=G_{t/2}(w)G_{2t}(z)$. For any $q>0$, if we define
	\[
		S_{t}(y,z)=\mathbf{E}u(t,y+z/2)u(t,y-z/2),\qquad\tilde{S}_{q,t}(y,z)=\mathbf{E}\tilde{u}_{q}(t,y+z/2)\tilde{u}_{q}(t,y-z/2),
	\]
	then $S_{t}$ satisfies \cref{eq:S-PDE} with initial condition
	\begin{equation}
		S_0(y,z)=\delta_{0}(y)\delta_{0}(z)\label{eq:Stic}
	\end{equation}
	and $t\mapsto\tilde{S}_{q,t}$ satisfies \cref{eq:S-PDE} for $t>q$
	with initial condition
	\begin{equation}
		\tilde{S}_{q,q}=H_{q}.\label{eq:Stildeqq}
	\end{equation}
	In particular, we have
	\begin{equation}
		S_{t}(\omega)=\Gamma_{t}(\omega;0).\label{eq:Stisgamma}
	\end{equation}
	Then define
	\[
		\overline{S}_{q,t}(y,z)=\mathbf{E}(u(t,y+z/2)-\tilde{u}_{q}(t,y+z/2))(u(t,y-z/2)-\tilde{u}_{q}(t,y-z/2)).
	\]
	Again $t\mapsto\overline{S}_{q,t}$ satisfies \cref{eq:S-PDE} in $t>q$.
	The initial condition is
	\begin{align}
		\overline{S}_{q,q}(y,z) & =\mathbf{E}\left[\left(u(q,y+z/2)-\tilde{u}_{q}(q,y+z/2)\right)\left(u(q,y-z/2)-\tilde{u}_{q}(q,y-z/2)\right)\right]\nonumber \\
		                        & =\mathbf{E}\left[\left(u(q,y+z/2)-G_{q}(y+z/2)\right)\left(u(q,y-z/2)-G_{q}(y-z/2)\right)\right]\nonumber                     \\
		                        & =\mathbf{E}[u(q,y+z/2)u(q,y-z/2)]-G_{q}(y+z/2)G_{q}(y-z/2)\nonumber                                                \\
		                        & =S_{q}(y,z)-\tilde{S}_{q,q}(y,z),\label{eq:Sbarisdiff}
	\end{align}
	where we used the fact that $\mathbf{E}u(t)=G_t$. Therefore, by the linearity of \cref{eq:S-PDE} we in fact have
	\[
		\overline{S}_{q,t}(y,z)=S_{t}(y,z)-\tilde{S}_{q,t}(y,z)
	\]
	for all $t>q$.

	\emph{Step 2: proving the bound. }Similar to the proof of \cref{prop:uMCauchy},
	our goal is to prove an upper bound on $\overline{S}_{q,t}(y,0)$.
	Since $t\mapsto\overline{S}_{q,t}$ satisfies \cref{eq:S-PDE} we have
	the identity
	\begin{equation}
		\overline{S}_{q,t}(\omega)=\int\Gamma_{t-q}(\omega;\omega')\overline{S}_{q,q}(\omega')\,\dif\omega'.\label{eq:Sbarformula}
	\end{equation}
	By the Duhamel principle applied to the PDE \cref{eq:S-PDE}, using
	the decomposition \cref{eq:AAbarAtilde}, we have
	\[
		S_{q}(\omega')=H_{q}(\omega')+ \int_{0}^{q}\int H_{q-s}(\omega'-\omega'')\tr\left[\nabla^{2}[\tilde{A}S_{s}](\omega'')\right]\,\dif\omega''\,\dif s.
	\]
	Subtracting \cref{eq:Stildeqq} and recalling \cref{eq:Sbarisdiff}, we
	obtain
	\[
		\overline{S}_{q,q}(\omega')= \int_{0}^{q}\int H_{q-s}(\omega'-\omega'')\tr\left[\nabla^{2}[\tilde{A}S_{s}](\omega'')\right]\,\dif\omega''\,\dif s.
	\]
	Now plugging this into \cref{eq:Sbarformula} and using Fubini's theorem,
	we obtain
	\begin{equation}
		\overline{S}_{q,t}(\omega)= \int_{0}^{q}\int K_{q-s,t-q}(\omega;\omega')\tr\left[\nabla^{2}[\tilde{A}S_{s}](\omega')\right]\,\dif\omega'\,\dif s,\label{eq:sbaralltogether}
	\end{equation}
	where we have defined
	\begin{equation}
		K_{r_{1},r_{2}}(\omega;\omega')=\int\Gamma_{r_{2}}(\omega;\omega'')H_{r_{1}}(\omega''-\omega')\,\dif\omega''.\label{eq:Krsdef}
	\end{equation}
	Integrating by parts in \cref{eq:sbaralltogether}, we have
	\begin{equation}
		\overline{S}_{q,t}(\omega)=\int_{0}^{q}\int S_{s}(\omega')\tr\left[\tilde{A}(\omega')\nabla_{\omega'}^{2}K_{q-s,t-q}(\omega;\omega')\right]\,\dif\omega'\,\dif s.
		\label{eq:sbarIBP}
	\end{equation}
	Using \cref{lem:Kderivs-1} below, and also another application of \cref{lem:fundsolnbound}
	(and \cref{eq:Stisgamma}) to bound $S_{s}(\omega')$, in \cref{eq:sbarIBP},
	we obtain, for $t\ge q+1$,
	\begin{equation}
		\begin{aligned}
			\left|\overline{S}_{q,t}(\omega)\right| & \le C\int_{0}^{q}[(q-s)^{-1}\wedge 1](t-s)^{-d}s^{-d}\int|\tilde{A}(z')|\exp\left\{ -\frac{|\omega'-\omega|^{2}}{C(t-s)}-\frac{|\omega'|^{2}}{Cs}\right\} \,\dif\omega'\,\dif s\label{eq:Sbarintegral} \\
			                                        & =C\left(\int_{0}^1+\int_1^q\right)[(q-s)^{-1}\wedge 1](t-s)^{-d}s^{-d}\int|\tilde{A}(z')|\exp\left\{ -\frac{|\omega'-\omega|^{2}}{C(t-s)}-\frac{|\omega'|^{2}}{Cs}\right\} \,\dif\omega'\,\dif s       \\
			                                        & \coloneqq I_1+I_2.
		\end{aligned}
	\end{equation}
	To control the above integral, we consider the region of $s\in(0,1)$ and $s\in(1,q)$ separately. For the integration in  $s\in(0,1)$, by the fact that $\tilde{A}$ is uniformly bounded, we integrate in $\omega'$ to derive
	\begin{equation}\label{eq:boundI1}
		\begin{aligned}
			I_1\leq Ct^{-d}\int_{0}^{1}[(q-s)^{-1}\wedge 1] \dif s \leq Ct^{-d}(q-1)^{-1}.
		\end{aligned}
	\end{equation}
	For the integration in $s\in(1,q)$, to control the inner integral, we write
	\begin{align*}
		\int & |\tilde{A}(z')|\exp\left\{ -\frac{|\omega'-\omega|^{2}}{C(t-s)}-\frac{|\omega'|^{2}}{Cs}\right\} \,\dif\omega'                                                                                             \\
		     & =\left(\int|\tilde{A}(z')|\exp\left\{ -\frac{|z'-z|^{2}}{C(t-s)}-\frac{|z'|^{2}}{Cs}\right\} \,\dif z'\right)\left(\int\exp\left\{ -\frac{|y'-y|^{2}}{C(t-s)}-\frac{|y'|^{2}}{Cs}\right\} \,\dif y'\right) \\
		     & \le C\|\tilde{A}\|_{L^{1}(\mathbf{R}^{d};\mathbf{R}^{2d\times2d})}\left(\frac{s(t-s)}{t}\right)^{d/2}                                                                                                      %
	\end{align*}
	for a new constant $C$, still depending only on $R$ and $\nu$.
	Using this bound in \cref{eq:Sbarintegral}, we obtain
	\begin{align}
		I_2 & \le C t^{-d/2}\int_{1}^{q}[(q-s)^{-1}\wedge 1](t-s)^{-d/2}s^{-d/2}\,\dif s\nonumber                    \\
		    & \le C t^{-d/2}(t-q)^{-d/2}\int_{1}^{q}[(q-s)^{-1}\wedge 1]s^{-d/2}\,\dif s.\label{eq:justintegralleft}
	\end{align}
	Now we estimate the last integral in two parts. First we have
	\begin{align*}
		\int_{q/2}^{q}[(q-s)^{-1}\wedge 1]s^{-d/2}\,\dif s & \le(q/2)^{-d/2}\int_{q/2}^{q}[(q-s)^{-1}\wedge 1]\,\dif s=(q/2)^{-d/2}[1+\log(q/2)].
	\end{align*}
	Second, we have
	\[
		\int_{1}^{q/2}(q-s)^{-1}s^{-d/2}\,\dif s\le2q^{-1}\int_{1}^{q/2}s^{-d/2}\,\dif s\le Cq^{-1}\bigg(\sqrt{q}\1_{d=1}+\log q\1_{d=2}+\1_{d\geq3}\bigg)\le Cq^{-((d/2)\wedge 1)}\log q.
	\]
	Using the last two inequalities in \cref{eq:justintegralleft} and taking $q=t-t^\beta$  we
	obtain, for a $C$ now depending also on $\beta$,
	\[
		I_2 \le Ct^{-(d\wedge (d/2+1))-\beta d/2}\log t.
	\]
	Combining this with \cref{eq:boundI1}, we obtain \cref{eq:takeqtminuslogt}.
\end{proof}
Now we must prove the lemma we used in the previous proof.
\begin{lem}
	\label{lem:Kderivs-1}Recall the definition \cref{eq:Krsdef} of $K_{r_{1},r_{2}}$.
	There is a constant $C=C(\nu,R)<\infty$ so that, for all $r_{1}>0$ and $r_{2}\ge 1$, we have
	\begin{equation}
		\left|\nabla_{\omega'}^{2}K_{r_{1},r_{2}}(\omega;\omega')\right|\le C(r_{1}^{-1}\wedge 1)(r_{1}+r_{2})^{-d}\exp\left\{ -\frac{|\omega'-\omega|^{2}}{C(r_{1}+r_{2})}\right\} .\label{eq:Kderivestimate}
	\end{equation}

\end{lem}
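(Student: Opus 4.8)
The plan is to prove two separate Gaussian bounds on $\nabla^2_{\omega'}K_{r_1,r_2}(\omega;\omega')$ — one with prefactor $r_1^{-1}$ (valid for all $r_1,r_2>0$) and one with prefactor $1$ (valid for $r_2\ge1$) — and then take the minimum; since $r_1^{-1}\wedge1=\min\{r_1^{-1},1\}$, this is exactly the prefactor appearing in \cref{eq:Kderivestimate}. Throughout I would use that $\Gamma_{r_2}(\omega;\omega'')$ does not depend on $\omega'$, and that $H_{r_1}(w,z)=G_{r_1/2}(w)G_{2r_1}(z)$ is a product of Gaussian densities whose covariances are multiples of $\nu I_d+R(0)$; since the latter has eigenvalues bounded above and away from $0$, explicit differentiation of Gaussians gives a constant $C=C(\nu,R)$ with
\[
 H_{r_1}(\omega)\le Cr_1^{-d}\exp\left\{-\frac{|\omega|^2}{Cr_1}\right\},\qquad |\nabla^2H_{r_1}(\omega)|\le Cr_1^{-d-1}\exp\left\{-\frac{|\omega|^2}{Cr_1}\right\},\qquad\omega\in\mathbf{R}^{2d}.
\]

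For the first bound I would differentiate under the integral sign in \cref{eq:Krsdef}, so that the Hessian falls entirely on $H_{r_1}$, then bound $\Gamma_{r_2}$ by \cref{lem:fundsolnbound} and $\nabla^2H_{r_1}$ as above. This reduces $|\nabla^2_{\omega'}K_{r_1,r_2}(\omega;\omega')|$ to a constant times the Gaussian convolution
\[
 \int_{\mathbf{R}^{2d}}r_2^{-d}\exp\left\{-\frac{|\omega-\omega''|^2}{Cr_2}\right\}r_1^{-d-1}\exp\left\{-\frac{|\omega''-\omega'|^2}{Cr_1}\right\}\dif\omega'',
\]
which, upon completing the square in $\omega''$, equals a constant times $r_1^{-1}(r_1+r_2)^{-d}\exp\{-|\omega-\omega'|^2/(C(r_1+r_2))\}$ after enlarging $C$. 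This argument uses only \cref{lem:fundsolnbound}, so it holds for all $r_1,r_2>0$; in particular it already gives \cref{eq:Kderivestimate} when $r_1\ge1$.

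For the second bound, valid when $r_2\ge1$, I would instead use that $(\nabla^2H_{r_1})(\omega''-\omega')=\nabla^2_{\omega''}[H_{r_1}(\omega''-\omega')]$ and integrate by parts twice in $\omega''$, moving both derivatives onto $\Gamma_{r_2}$:
\[
 \nabla^2_{\omega'}K_{r_1,r_2}(\omega;\omega')=\int_{\mathbf{R}^{2d}}\left(\nabla^2_{\omega''}\Gamma_{r_2}\right)(\omega;\omega'')H_{r_1}(\omega''-\omega')\dif\omega''.
\]
The boundary terms vanish because $H_{r_1}$ and its first derivative decay like a Gaussian while $\Gamma_{r_2}(\omega;\cdot)$ and $\nabla_{\omega''}\Gamma_{r_2}(\omega;\cdot)$ have at most Gaussian growth by the classical estimates of \cite{Fri64}. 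Then \cref{lem:fundsolnderivbound} (this is where $r_2\ge1$ enters) bounds $|\nabla^2_{\omega''}\Gamma_{r_2}(\omega;\omega'')|$ by $Cr_2^{-d}\exp\{-|\omega-\omega''|^2/(Cr_2)\}$, and bounding $H_{r_1}$ as above, the same convolution computation as before — now with prefactor $r_2^{-d}r_1^{-d}$ in place of $r_2^{-d}r_1^{-d-1}$ — yields a constant times $(r_1+r_2)^{-d}\exp\{-|\omega-\omega'|^2/(C(r_1+r_2))\}$, which is \cref{eq:Kderivestimate} with prefactor $1$. Taking the minimum of the two bounds gives \cref{eq:Kderivestimate}. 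The substantive points are the Gaussian convolution bookkeeping (tracking how the polynomial prefactors and the exponential rates combine when two Gaussians of mismatched variances are convolved over $\mathbf{R}^{2d}$) and the justification of the double integration by parts; I expect the latter — specifically, checking that no boundary terms survive, which rests on first-derivative Gaussian bounds for $\Gamma_{r_2}$ — to be the most delicate point, though it is standard parabolic input from \cite{Fri64}, already used in the proof of \cref{lem:fundsolnderivbound}.
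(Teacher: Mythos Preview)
Your proof is correct and matches the paper's own argument essentially line for line: two Gaussian bounds—one by differentiating $H_{r_1}$ and invoking \cref{lem:fundsolnbound}, the other by integrating by parts onto $\Gamma_{r_2}$ and invoking \cref{lem:fundsolnderivbound}—followed by taking the minimum. The paper does not spell out the vanishing of boundary terms in the integration by parts, so your extra remark there is a harmless elaboration rather than a deviation.
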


\begin{proof}
	Differentiating \cref{eq:Krsdef} and using \cref{lem:fundsolnbound},
	we have
	\begin{align}
		\left|\nabla_{\omega'}^{2}K_{r_{1},r_{2}}(\omega;\omega')\right| & \le\int\Gamma_{r_{2}}(\omega;\omega'')\left|\nabla_{\omega'}^{2}H_{r_{1}}(\omega''-\omega')\right|\,\dif\omega''\nonumber                                \\
		                                                                 & \le Cr_{2}^{-d}r_{1}^{-d-1}\int\exp\left\{ -C^{-1}r_{2}^{-1}|\omega-\omega''|^{2}-C^{-1}r_{1}^{-1}|\omega''-\omega'|^{2}\right\} \,\dif\omega''\nonumber \\
		                                                                 & \le Cr_{1}^{-1}(r_{1}+r_{2})^{-d}\exp\left\{ -C^{-1}(r_{1}+r_{2})^{-1}|\omega-\omega'|^{2}\right\},\label{eq:differentiateH}
	\end{align}
	where we allowed the constant $C$ to change from line to line. Alternatively,
	we can use integration by parts and \cref{lem:fundsolnderivbound} to derive
	that
	\begin{align}
		\left|\nabla_{\omega'}^{2}K_{r_{1},r_{2}}(\omega;\omega')\right| & \le\int\left|\nabla_{\omega''}^{2}\Gamma_{r_{2}}(\omega;\omega'')\right|H_{r_{1}}(\omega''-\omega')\,\dif\omega''\nonumber                             \\
		                                                                 & \le Cr_{2}^{-d}r_{1}^{-d}\int\exp\left\{ -C^{-1}r_{2}^{-1}|\omega-\omega''|^{2}-C^{-1}r_{1}^{-1}|\omega''-\omega'|^{2}\right\} \,\dif\omega''\nonumber \\
		                                                                 & \le C(r_{1}+r_{2})^{-d}\exp\left\{ -C^{-1}(r_{1}+r_{2})^{-1}|\omega-\omega'|^{2}\right\},\label{eq:differentiategamma}
	\end{align}
	where again $C$ changed from line to line.
	Together, \cref{eq:differentiateH} and \cref{eq:differentiategamma}
	imply \cref{eq:Kderivestimate}.
\end{proof}

Now we want to show that, when $1\ll t-q\ll t$, the field $\tilde{u}_{q}(t)$ is well-approximated by
the stationary solution $U(t)$ multiplied by $G_t$. Let $\underline{u}_{q}$ solve \cref{eq:utdef}
for $t>q$, with initial condition
\[
	\underline{u}_{q}(q)\equiv1,
\] so by \cref{cor:theconvergenceweneed}
we have%

\begin{equation}
	\adjustlimits\lim_{t-q\to\infty}\sup_{x\in\mathbf{R}^{d}}\mathbf{E}|\underline{u}_{q}(t,x)-U(t,x)|^{2}=0.\label{eq:underlineUapproachesstationary}
\end{equation}

\begin{prop}
	There exists a constant $C$ so that, for any $x\in\mathbf{R}^{d}$ and $t>q$, we
	have
	\begin{equation}
		\mathbf{E}|\tilde{u}_{q}(t,x)-G_{q}(x)\underline{u}_{q}(t,x)|^{2}\le Cq^{-d-1}(t-q),\label{eq:approxbyconst}
	\end{equation}
	and in particular, for any $\beta\in(0,1)$, there exists a constant $C=C(R,\nu,\beta)$
	such that for all $t>1$
	\begin{equation}
		\sup_{x\in\mathbf{R}^{d}}\mathbf{E}\left|\tilde{u}_{t-t^{\beta}}(t,x)-G_{t-t^{\beta}}(x)\underline{u}_{t-t^{\beta}}(t,x)\right|^{2}\le Ct^{-d-1+\beta}.\label{eq:approxbyconst-1}
	\end{equation}
\end{prop}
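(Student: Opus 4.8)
The plan is to exploit the linearity of the SPDE \cref{eq:utdef} together with the fact that the Gaussian $G_q$ varies slowly in space. Fix $x$, and let $w$ solve \cref{eq:utdef} for $t>q$ with the (deterministic, bounded, smooth) initial condition $w(q,\cdot)=G_q(\cdot)-G_q(x)$. Since \cref{eq:utdef} is linear and $G_q(x)$ is a scalar, uniqueness of solutions gives $w(t,\cdot)=\tilde u_q(t,\cdot)-G_q(x)\underline u_q(t,\cdot)$, so the left-hand side of \cref{eq:approxbyconst} is exactly $\mathbf{E}|w(t,x)|^2$. Writing $\omega=(y,z)$ and proceeding exactly as in the derivation of the second-moment PDE in \cref{subsec:second-moment-PDE} and in Step~1 of the proof of \cref{prop:turnoffnoise} (which uses only that $w$ solves \cref{eq:utdef}, not positivity), the function $S^w_{q,t}(y,z)\coloneqq\mathbf{E}[w(t,y+z/2)\,w(t,y-z/2)]$ solves \cref{eq:S-PDE} for $t>q$ with initial datum $S^w_{q,q}(y,z)=(G_q(y+z/2)-G_q(x))(G_q(y-z/2)-G_q(x))$, and hence, representing it through the fundamental solution,
\[
	\mathbf{E}|w(t,x)|^2=S^w_{q,t}(x,0)=\int_{\R^{2d}}\Gamma_{t-q}\big((x,0);\omega'\big)\,S^w_{q,q}(\omega')\,\dif\omega'.
\]

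Next I would bound the initial datum using that $G_q$ is Lipschitz with small constant. From the explicit formula for $G_q$ one has $\nabla G_q(\xi)=-q^{-1}(\nu I_d+R(0))^{-1}\xi\,G_q(\xi)$, and since $\sup_\xi q^{-1}|\xi|\,G_q(\xi)\le Cq^{-(d+1)/2}$, this gives $\|\nabla G_q\|_{L^\infty}\le Cq^{-(d+1)/2}$. Therefore, with $\omega'=(y',z')$,
\[
	|S^w_{q,q}(\omega')|\le\|\nabla G_q\|_{L^\infty}^2\,\big(|y'-x|+\tfrac12|z'|\big)^2\le Cq^{-(d+1)}\big(|y'-x|+|z'|\big)^2.
\]
Inserting this and the Gaussian bound \cref{eq:Gammatbd} for $\Gamma_{t-q}$ into the integral above and substituting $y'-x=\sqrt{t-q}\,\tilde y$, $z'=\sqrt{t-q}\,\tilde z$ (Jacobian $(t-q)^d$, and $(|y'-x|+|z'|)^2=(t-q)(|\tilde y|+|\tilde z|)^2$), the factors $(t-q)^{-d}$ and $(t-q)^d$ cancel and the remaining Gaussian integral $\int(|\tilde y|+|\tilde z|)^2e^{-C^{-1}(|\tilde y|^2+|\tilde z|^2)}\,\dif\tilde y\,\dif\tilde z$ is a finite constant depending only on $\nu,R$. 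This yields $\mathbf{E}|w(t,x)|^2\le Cq^{-d-1}(t-q)$, which is \cref{eq:approxbyconst}.

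For \cref{eq:approxbyconst-1} I would simply set $q=t-t^\beta$, so $t-q=t^\beta$; since $\beta<1$, for $t$ large enough (depending on $\beta$) one has $t^\beta\le t/2$, hence $q\ge t/2$ and $q^{-d-1}\le 2^{d+1}t^{-d-1}$. Plugging into \cref{eq:approxbyconst} gives $\mathbf{E}|\tilde u_{t-t^\beta}(t,x)-G_{t-t^\beta}(x)\underline u_{t-t^\beta}(t,x)|^2\le Ct^{-d-1+\beta}$, uniformly in $x$.

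I do not expect a serious obstacle here: the conceptual content is that the only error incurred by replacing the initial datum $G_q$ by the constant $G_q(x)$ is controlled by the oscillation of $G_q$ over the diffusive length scale $\sqrt{t-q}$ of the remaining evolution, namely $\|\nabla G_q\|_{L^\infty}\sqrt{t-q}\lesssim q^{-(d+1)/2}\sqrt{t-q}$, whose square is the claimed $q^{-d-1}(t-q)$. The only points requiring a little care are (i) checking that $S^w_{q,t}$ genuinely solves \cref{eq:S-PDE} with the stated initial condition even though the initial datum does not decay at infinity — this is handled exactly as the analogous assertions in \cref{prop:uMCauchy} and \cref{prop:turnoffnoise}, using linearity of \cref{eq:utdef} and the fact that the parabolic theory of \cref{subsec:second-moment-PDE} accommodates bounded data — and (ii) the bookkeeping of the powers of $t-q$ in the rescaled Gaussian integral.
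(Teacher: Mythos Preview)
Your proof is correct and follows essentially the same approach as the paper: both form the second-moment function for the difference $\tilde u_q-G_q(x)\underline u_q$, observe it solves \cref{eq:S-PDE} with initial datum $(G_q(y'+z'/2)-G_q(x))(G_q(y'-z'/2)-G_q(x))$, represent it via $\Gamma_{t-q}$, and then combine the Gaussian bound of \cref{lem:fundsolnbound} with $\|\nabla G_q\|_\infty\le Cq^{-(d+1)/2}$ to extract the factor $q^{-d-1}(t-q)$ after rescaling. The only cosmetic differences are that the paper writes the product $|y'+z'/2|\cdot|y'-z'/2|$ after shifting $y'\mapsto y'+x$, and that your derivation of \cref{eq:approxbyconst-1} explicitly handles $q\ge t/2$ for large $t$ (with small $t$ absorbed into the constant), whereas the paper simply says it ``follows immediately.''
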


\begin{proof}
	Fix $q,x$. First recall that $\tilde{u}_q, \underline{u}_q$ both solve \cref{eq:utdef} in $t>q$. Let
	\[
		\underline{S}_{q,t,x}(y,z)=\mathbf{E}(\tilde{u}_{q}(t,y+z/2)-G_{q}(x)\underline{u}_{q}(t,y+z/2))(\tilde{u}_{q}(t,y-z/2)-G_{q}(x)\underline{u}_{q}(t,y-z/2)).
	\]
	Then as a function of $(t,y,z)$, we have $\underline{S}_{q,t,x}$ solves \cref{eq:S-PDE} with initial condition
	\[
		\underline{S}_{q,q,x}(y,z)=\left(G_{q}(y+z/2)-G_{q}(x)\right)\left(G_{q}(y-z/2)-G_{q}(x)\right).
	\]
	Therefore, we have
	\[
		\underline{S}_{q,t,x}(y,z)=\int\left(G_{q}(y'+z'/2)-G_{q}(x)\right)\left(G_{q}(y'-z'/2)-G_{q}(x)\right)\Gamma_{t-q}(y,z;y',z')\,\dif y'\,\dif z',
	\]
	and so
	\begin{align*}
		\underline{S}_{q,t,x}(x,0) & =\int\left|G_{q}(y'+z'/2)-G_{q}(x)\right|\left|G_{q}(y'-z'/2)-G_{q}(x)\right|\Gamma_{t-q}(x,0;y',z')\,\dif y'\,\dif z'                                \\
		                           & \le C(t-q)^{-d}\int\left|G_{q}(y'+z'/2)-G_{q}(x)\right|\left|G_{q}(y'-z'/2)-G_{q}(x)\right|\e^{-\frac{|y'-x|^{2}+|z'|^{2}}{C(t-q)}}\,\dif y'\,\dif z' \\
		                           & \le C(t-q)^{-d}\|\nabla G_{q}\|_{\infty}^{2}\int|y'+z'/2|\cdot|y'-z'/2|\e^{-\frac{|y'|^{2}+|z'|^{2}}{C(t-q)}}\,\dif y'\,\dif z'                       \\
		                           & \le Cq^{-d-1}(t-q)\int|y'+z'/2|\cdot|y'-z'/2|\e^{-\frac{|y'|^{2}+|z'|^{2}}{C}}\,\dif y'\,\dif z',
	\end{align*}
	where in the first inequality we used \cref{lem:fundsolnbound}. This
	completes the proof of \cref{eq:approxbyconst}, and \cref{eq:approxbyconst-1}
	follows immediately.
\end{proof}
Now we can prove our main theorem.
\begin{proof}[Proof of \cref{thm:maintheorem}]
	Fix $\beta\in(0,1)$ and let $q=t-t^{\beta}$. We use the triangle
	inequality to write
	\begin{equation}
		\begin{aligned}\mathbf{E}|u(t,x)-G_{t}(x)U(t,x)|^{2} & \le C\mathbf{E}|u(t,x)-\tilde{u}_{q}(t,x)|^{2}+C\mathbf{E}|\tilde{u}_{q}(t,x)-G_{q}(x)\underline{u}_{q}(t,x)|^{2}              \\
                                                     & \quad+C|G_{q}(x)-G_{t}(x)|^{2}\mathbf{E}\underline{u}_{q}(t,x)^{2}+CG_{t}(x)^{2}\mathbf{E}|\underline{u}_{q}(t,x)-U(t,x)|^{2}.
		\end{aligned}
		\label{eq:splitupfinalthing}
	\end{equation}
	We note that
	\begin{align}
		\sup_{x\in\mathbf{R}^{d}}|G_{q}(x)-G_{t}(x)| & \le(t-q)\sup_{x\in\mathbf{R}^{d},s\in[q,t]}|\partial_{s}G_{s}(x)|\le Ct^{\beta-d/2-1}.\label{eq:fudgethegaussian}
	\end{align}
	Applying \cref{eq:takeqtminuslogt}, \cref{eq:approxbyconst-1}, \cref{eq:fudgethegaussian}
	(along with the fact that $\mathbf{E}\underline{u}_{q}(t,x)^{2}$
	is uniformly bounded by \cref{cor:theconvergenceweneed}), and \cref{eq:underlineUapproachesstationary},
	respectively, to the four terms on the right side of \cref{eq:splitupfinalthing},
	we obtain for every $\eps>0$ (or $\eps=0$ if $d=1$), there is a constant
	$C=C(R,\nu,\beta,\eps)<\infty$ so that
	\[
		\begin{aligned}\mathbf{E}|u(t,x)-G_{t}(x)U(t,x)|^{2} & \le C\left(t^{-(d\wedge (d/2+1))-\beta d/2}\log t+t^{-d-1+\beta}+t^{2\beta-d-2}+t^{-d-\beta d/2+\beta\eps}\right),\end{aligned}
	\]
	Then we take
	\[
		\beta=\tfrac23 \1_{d=1}+\tfrac{d}{d+2}\1_{d\ge 2}
	\]
	to further derive that
	\[
        \mathbf{E}|u(t,x)-G_{t}(x)U(t,x)|^{2}\leq  Ct^{-d}\bigg(t^{-1/3}\1_{d=1}+t^{-2/(2+d)+\eps d/(2+d)}\1_{d\ge 2}\bigg)\log t.
	\]
	Changing $\eps$ yields \cref{eq:mainthm-bound}, and \cref{eq:takethesup} is then a consequence of the formula for the Gaussian density.
	
	If $\sum_{i=1}^d \frac{\partial R_{ij}}{\partial x_i}\equiv 0$ for each $j$, then by \cref{rem:incompressiblechi}, \cref{eq:UCov} and the fact that $\mathbf{E} U\equiv 1$,  we have $U\equiv 1$ almost surely. On the other hand, if $V$ is not incompressible, then it is clear that the constant $1$ does not solve \cref{eq:utdef}, and so $U$ cannot be a.s.\ identically equal to $1$ by \cref{cor:Uexists}. Finally, \cref{eq:Ucorrbd} follows from \cref{eq:UCov} and \cref{eq:powerlaw}, and \cref{eq:Utimecorrbd} follows from \cref{prop:timecorr}. This completes the proof of the theorem.
	\end{proof}
\bibliographystyle{hplain-ajd}
\bibliography{rwre}

\end{document}